\title{Tukey morphisms between finite relations}
\author{Rhett Barton}
\address{Rhett Barton, Boise State University and Blue Cross of Idaho}
\email{rhett.bart@gmail.com}
\author{Samuel Coskey}
\address{Samuel Coskey, Boise State University and UCL}
\email{scoskey@gmail.com}
\author{Paul Ellis}
\address{Paul Ellis, Manhattanville University and Rutgers University}
\email{paulellis@paulellis.org}
\keywords{Tukey morphism, Galois connection, bipartite graph, binary relation}
\subjclass[2020]{05C30,03E05,06A06}
\newtheorem{theorem}{Theorem}[section]
\newtheorem{lemma}[theorem]{Lemma}
\newtheorem{proposition}[theorem]{Proposition}
\newtheorem{corollary}[theorem]{Corollary}
\theoremstyle{definition}
\newtheorem{definition}[theorem]{Definition}
\theoremstyle{remark}
\newtheorem{question}{Question}
\newcommand{\id}{\mathrm{id}}
\makeatletter\pretocmd{\@seccntformat}{\S}{}{}
  \pretocmd{\@subseccntformat}{\S}{}{}\makeatother
\begin{document}

\begin{abstract}
  We investigate Tukey morphisms between binary relations, establishing several fundamental lemmas.  We then specialize to finite binary relations, using computational methods to classify all binary relations with at most $6$ points in the domain and codomain up to bimorphism.  Finally we give a construction of finite binary relations with arbitrary dominating number and dual dominating number.
\end{abstract}

\maketitle

\section{Introduction}
\label{sec:intro}

A \emph{binary relation} is a triple $\bm{A}=(A_-,A_+,A)$, where $A_\pm$ are arbitrary sets and $A\subseteq A_-\times A_+$. Given binary relations $\bm{A},\bm{B}$, a \emph{Tukey morphism} or simply \emph{morphism} from $\bm{A}$ to $\bm{B}$ is a pair of functions $\phi=(\phi_-,\phi_+)$ where
\begin{align*}
  \phi_-&\colon B_-\to A_-\\
  \phi_+&\colon A_+\to B_+\text{,}
\end{align*}
and for all $b\in B_-$ and $a\in A_+$,
\begin{equation}
  \label{eq:morphism}
  \phi_-(b)\mathrel{A}a\implies b\mathrel{B}\phi_+(a)\text{.}
\end{equation}

As a mnemonic for the condition \eqref{eq:morphism}, we draw the generalized commuting diagram shown below, where the wavy lines represent relations rather than functions.
\begin{center}
  \begin{tikzpicture}[scale=2]
    \node at (0, 0) (Ap)  {$A_+$};
    \node at (1, 0) (Bp) {$B_+$};
    \draw[->] (Ap) -- node[above] {$\phi_+$} (Bp);
    \node at (0,-1) (Am)  {$A_-$}
      edge[->,decorate,decoration={snake,amplitude=.5}]
      node[left]{$A$} (Ap);
    \node at (1,-1) (Bm) {$B_-$}
      edge[->,decorate,decoration={snake,amplitude=.5}]
      node[right]{$B$} (Bp);
    \draw[->] (Bm) -- node[below] {$\phi_-$} (Am);
  \end{tikzpicture}
\end{center}

We use the notation $\phi\colon\bm{A}\to\bm{B}$ to express that $\phi$ is a morphism from $\bm{A}$ to $\bm{B}$, and the notation $\bm{A}\to\bm{B}$ to express that there exists a morphism from $\bm{A}$ to $\bm{B}$. We say that $\bm{A}$ and $\bm{B}$ are \emph{bimorphic} if $\bm{A}\to\bm{B}$ and $\bm{B}\to\bm{A}$.  See Figure \ref{fig:example of a morphism} for an example of a morphism.

\begin{figure}[!h]
  \centering
  \begin{tikzpicture}[scale=.5]
    \draw[fill=black] (0,0) circle (2pt);
    \draw[fill=black] (2,0) circle (2pt);
    \draw[fill=black] (4,0) circle (2pt);
    \draw[fill=black] (0,-4) circle (2pt);
    \draw[fill=black] (2,-4) circle (2pt);
    \draw[fill=black] (4,-4) circle (2pt);
    \draw[thick] (0,0) -- (0,-4);
    \draw[thick] (2,0) -- (2,-4);
    \draw[thick] (4,0) -- (4,-4);
    \draw[thick] (4,0) -- (2,-4);
    \node at (-1,0) {$A_{+}$};
    \node at (-1,-4) {$A_{-}$};
    \node at (-1,-2) {$A$};
    \draw[fill=black] (0+11,0) circle (2pt);
    \draw[fill=black] (2+11,0) circle (2pt);
    \draw[fill=black] (4+11,0) circle (2pt);
    \draw[fill=black] (0+11,-4) circle (2pt);
    \draw[fill=black] (2+11,-4) circle (2pt);
    \draw[fill=black] (4+11,-4) circle (2pt);
    \draw[thick] (0+11,0) -- (0+11,-4);
    \draw[thick] (2+11,0) -- (2+11,-4);
    \draw[thick] (4+11,0) -- (4+11,-4);
    \draw[thick] (0+11,0) -- (2+11,-4);
    \node at (-1+11,0) {$B_{+}$};
    \node at (-1+11,-4) {$B_{-}$};
    \node at (-1+11,-2) {$B$};
    \draw[->,  thick, OI1] (2,0.2) to [bend left] (4+11,0.2);
    \draw[->,  thick, OI2] (0,0.2) to [bend left] (0+11,0.2);
    \draw[->,  thick, OI1] (4,0.2) to [bend left] (4+11,0.2);
    \draw[->, thick, OI2] (0+11,-4.2) to [bend left] (0,-4.2);
    \draw[->, thick, OI2] (2+11,-4.2) to [bend left] (0,-4.2);
    \draw[->, thick, OI1] (4+11,-4.2) to [bend left] (2,-4.2);
  \end{tikzpicture}
  \caption{A morphism from $\bm{A}$ to $\bm{B}$.\label{fig:example of a morphism}}
\end{figure}

Observe that Tukey morphisms may be composed, which justifies the use of the category theoretic term. Indeed, if $\phi\colon\bm{A}\to\bm{B}$ and $\psi\colon\bm{B}\to\bm{C}$, then letting $\psi\circ\phi=(\phi_-\circ\psi_-,\psi_+\circ\phi_+)$ we have $\psi\circ\phi\colon\bm{A}\to\bm{C}$. As a conseqence, morphisms define a quasi-order (reflexive and transitive) on binary relations, and bimorphism defines an equivalence relation on binary relations.

Tukey morphisms and several close relatives of Tukey morphisms arise in many areas of mathematics. They were introduced in the context of topology and filterbases, where they are closely related to cofinal mappings, see \cite{tukey}. Many other applications of Tukey morphisms have since emerged in topology and measure theory, see for instance \cite{todorcevic,solecki}. In algebra, Tukey morphisms appear in the context of generalized Galois connections \cite{erne}. In set theory, Tukey morphisms appear in the context of ultrafilters \cite{dobrinen}, as well as in the study of cardinal characteristics of the continuum \cite{blass}. In this context, \cite{blass} began to study Tukey morphisms for their own sake. He later introduced the notion of \emph{definable} Tukey morphism, see for instance \cite{blass-borel,mildenberger,coskey}.

In this article we study Tukey morphisms in their own right, and explore some of their fundamental properties. We furthermore begin to investigate the combinatorics of Tukey morphisms between \emph{finite} binary relations.

Before summarizing the results in the rest of the paper, we recall some background material concerning the notion of dominating families, dominating numbers, and dual relations.

\begin{definition}
  \label{def:dom number}
  Let $\bm{A}$ be a binary relation. A subset $Y\subseteq A_+$ is called an \emph{A-dominating family} if for every $a \in A_{-}$ there exists $\alpha \in Y$ such that $a\mathrel{A}\alpha$.  The \emph{dominating number} $\delta(\mathbf{A})$ of $\bm{A}$ is the minimum size of an $\bm{A}$-dominating family. If there is no A-dominating family, we set $\delta(\mathbf{A})=\infty$.
\end{definition}

It is clear that a dominating family of minimum size is also minimal, in the sense that it contains no proper subsets which are dominating families. We remark that there may be minimal dominating families which are not of minimum size, such as the example in Figure~\ref{fig:dom families}.
    
\begin{figure}[!h]
    \centering
    \begin{tikzpicture}[scale=.5]
        \draw[fill=black] (0,0) circle (2pt);
        \draw[fill=black] (2,0) circle (2pt);
        \draw[fill=black] (4,0) circle (2pt);
        \draw[fill=black] (6,0) circle (2pt);
        \draw[fill=black] (0,-4) circle (2pt);
        \draw[fill=black] (2,-4) circle (2pt);
        \draw[fill=black] (4,-4) circle (2pt);
        \draw[thick] (0,0) -- (0,-4);
        \draw[thick] (2,0) -- (2,-4);
        \draw[thick] (4,0) -- (4,-4);
        \draw[thick] (6,0) -- (0,-4);
        \draw[thick] (6,0) -- (2,-4);
        \draw[thick] (6,0) -- (4,-4);
        \node at (-1,-2) {$A$};
        \node at (-1,-4) {$A_{-}$};
        \node at (-1,0) {$A_{+}$};
        \draw[thick, red] (2,0) ellipse (2.25 and 0.5); 
        \draw[thick, blue] (6,0) circle (10pt); 
    \end{tikzpicture}
    \caption{The red ellipse denotes a minimal dominating family, the blue circle denotes a dominating family of minimum size.\label{fig:dom families}}
\end{figure}

The dominating number is a key invariant of a binary relation. The following result states that Tukey morphisms respect the dominating number order.

\begin{proposition}[Blass \cite{blass}]\
  \label{prop:blass}
  \begin{itemize}
    \item Let $\phi$ be a morphism from $\bm{A}$ to $\bm{B}$. If $\mathcal F$ is an $\bm{A}$-dominating family, then $\phi_+(\mathcal F)$ is a $\bm{B}$-dominating family.
    \item If $\bm{A}\to\bm{B}$ then $\delta(\bm{A})\geq\delta(\bm{B})$.
  \end{itemize}
\end{proposition}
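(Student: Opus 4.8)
The plan is to prove the first bullet by a one-step diagram chase and then read off the second bullet as an immediate consequence.

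For the first bullet, suppose $\phi=(\phi_-,\phi_+)\colon\bm A\to\bm B$ and let $\mathcal F\subseteq A_+$ be an $\bm A$-dominating family. To see that $\phi_+(\mathcal F)$ is $\bm B$-dominating, I would fix an arbitrary $b\in B_-$. Transporting $b$ across to the $\bm A$ side produces $\phi_-(b)\in A_-$, and since $\mathcal F$ dominates $\bm A$ there is some $\alpha\in\mathcal F$ with $\phi_-(b)\mathrel A\alpha$. Applying the defining property \eqref{eq:morphism} of a morphism to this $b$ and to $a=\alpha$ yields $b\mathrel B\phi_+(\alpha)$. As $\phi_+(\alpha)\in\phi_+(\mathcal F)$ and $b$ was arbitrary, $\phi_+(\mathcal F)$ is $\bm B$-dominating.

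For the second bullet, assume $\bm A\to\bm B$ and fix a witnessing morphism $\phi$. If $\delta(\bm A)=\infty$ the inequality $\delta(\bm A)\geq\delta(\bm B)$ is immediate, so suppose instead that $\mathcal F$ is an $\bm A$-dominating family of minimum size, $|\mathcal F|=\delta(\bm A)$. By the first bullet $\phi_+(\mathcal F)$ is $\bm B$-dominating; in particular $\bm B$ admits a dominating family, so $\delta(\bm B)<\infty$. Since $|\phi_+(\mathcal F)|\leq|\mathcal F|$, we get $\delta(\bm B)\leq\delta(\bm A)$.

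I do not expect a real obstacle here; the argument is a routine diagram chase. The only subtlety worth flagging is that $\phi_+$ need not be injective on $\mathcal F$, so the argument delivers only $|\phi_+(\mathcal F)|\leq|\mathcal F|$ rather than equality — which is, however, exactly the direction needed. It is also worth noting that the statement is insensitive to whether $\delta$ takes the value $\infty$, since in that case the inequality is vacuous.
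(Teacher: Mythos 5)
Your proof is correct and follows exactly the route the paper intends: the first bullet is the direct diagram chase using the implication \eqref{eq:morphism}, and the second bullet follows by pushing forward a minimum-size dominating family and noting $|\phi_+(\mathcal F)|\leq|\mathcal F|$. The paper states this only in one sentence after the proposition; your write-up fills in the same argument in full detail, including the sensible remarks about non-injectivity of $\phi_+$ and the $\infty$ case.
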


The first item is a consequence of the implication \eqref{eq:morphism}, and the second item is an immediate consequence of the first item.

Binary relations together with Tukey morphisms admit a natural notion of duality.

\begin{definition}
  Given a binary relation $\mathbf{A} = (A_{-},A_{+},A)$, the \emph{dual} of $\mathbf{A}$ is the binary relation $\mathbf{A}^\bot = (A_{+},A_{-},\neg \Check{A})$, where $\neg\Check{A}$ denotes the complement of the converse of $A$, that is, $x\mathrel{\mathord{\neg}\Check{A}}y$ if and only if $y \not\mathrel{A} x$.  We call $\delta(\bm{A}^\perp)$ the \emph{dual dominating number} of $\bm{A}$.
\end{definition}

We observe that $(\bm{A}^\perp)^\perp=\bm{A}$. The next result states that Tukey morphisms interact with duals in the usual way.

\begin{proposition}[Blass \cite{blass}]\
  \label{prop:dual dominating number}
  \begin{itemize}
    \item If $(\phi_-,\phi_+)$ is a morphism from $\bm{A}$ to $\bm{B}$, then $(\phi_+,\phi_-)$ is a morphism from $\bm{B^\perp}$ to $\bm{A^\perp}$.
    \item We have $\bm{A}\to\bm{B}$ if and only if $\bm{B^\perp}\to\bm{A^\perp}$.
  \end{itemize}
\end{proposition}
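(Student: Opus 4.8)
The plan is to establish the first item by directly unwinding the definition of a morphism for the dual relations, and then to obtain the second item as a purely formal consequence of the first together with the involutivity $(\bm{A}^\perp)^\perp = \bm{A}$ already observed above.

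For the first item, I would begin by pinning down the types. Since $\bm{B}^\perp = (B_+, B_-, \neg\check B)$ and $\bm{A}^\perp = (A_+, A_-, \neg\check A)$, a morphism from $\bm{B}^\perp$ to $\bm{A}^\perp$ must consist of a map $(\bm{A}^\perp)_- \to (\bm{B}^\perp)_-$, that is $A_+ \to B_+$, together with a map $(\bm{B}^\perp)_+ \to (\bm{A}^\perp)_+$, that is $B_- \to A_-$; these are exactly $\phi_+$ and $\phi_-$, so the pair $(\phi_+,\phi_-)$ has the correct shape. The remaining content is to verify the implication \eqref{eq:morphism} for this pair. Writing it out for $a \in A_+$ (playing the role of the domain element) and $b \in B_-$ (playing the role of the codomain element), and substituting the definitions of $\neg\check B$ and $\neg\check A$, the condition becomes: if $b \not\mathrel{B} \phi_+(a)$ then $\phi_-(b) \not\mathrel{A} a$. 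I would then observe that this is precisely the contrapositive of the hypothesis that $(\phi_-,\phi_+)$ is a morphism from $\bm{A}$ to $\bm{B}$, namely $\phi_-(b) \mathrel{A} a \implies b \mathrel{B} \phi_+(a)$, instantiated at the same $a$ and $b$. This finishes the first item.

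For the second item, the forward direction is immediate: given a morphism $\bm{A} \to \bm{B}$, the first item produces a morphism $\bm{B}^\perp \to \bm{A}^\perp$. For the converse, suppose $\bm{B}^\perp \to \bm{A}^\perp$; applying the first item to this morphism yields a morphism $(\bm{A}^\perp)^\perp \to (\bm{B}^\perp)^\perp$, and since $(\bm{A}^\perp)^\perp = \bm{A}$ and $(\bm{B}^\perp)^\perp = \bm{B}$, this is a morphism $\bm{A} \to \bm{B}$.

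The whole argument is essentially a bookkeeping exercise, so I do not expect any genuine obstacle. The only point requiring care is keeping straight which coordinate of each dual relation serves as domain and which as codomain, and correctly rendering $x \mathrel{\neg\check A} y$ as $y \not\mathrel{A} x$, so that the morphism inequality for the duals lands exactly on the contrapositive of the original one rather than on some near miss.
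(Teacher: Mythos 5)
Your proposal is correct and follows exactly the route the paper indicates: the first item is the contrapositive of the morphism condition \eqref{eq:morphism} after the type-checking you carry out, and the second item follows formally from the first via $(\bm{A}^\perp)^\perp=\bm{A}$. Your write-up merely spells out the bookkeeping that the paper leaves implicit.
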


The first item amounts to taking the contrapositive of the implication condition \eqref{eq:morphism}. The second item is an immediate consequence of the first item.

\begin{corollary}
  \label{cor:dual dom number}
  If $\bm{A}\to\bm{B}$ then $\delta(\bm{A})\geq\delta(\bm{B})$ and $\delta(\bm{B}^\perp)\geq\delta(\bm{A}^\perp)$
\end{corollary}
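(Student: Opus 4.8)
The plan is to derive both inequalities directly from the two propositions of Blass recalled above, so the proof will be short. For the first inequality, I would simply invoke the second item of Proposition~\ref{prop:blass}: the hypothesis $\bm{A}\to\bm{B}$ gives $\delta(\bm{A})\geq\delta(\bm{B})$ with nothing further to check.

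For the second inequality, the idea is to pass to duals first and then apply the same monotonicity. Concretely, I would start from $\bm{A}\to\bm{B}$, apply the second item of Proposition~\ref{prop:dual dominating number} to conclude $\bm{B}^\perp\to\bm{A}^\perp$, and then apply the second item of Proposition~\ref{prop:blass} to this morphism to obtain $\delta(\bm{B}^\perp)\geq\delta(\bm{A}^\perp)$. There is no real obstacle here — the only thing to be careful about is the order of the relations after dualizing, since dualizing reverses the direction of the morphism, but this is precisely what Proposition~\ref{prop:dual dominating number} records. One could alternatively note that the second inequality is just the first inequality applied to $\bm{A}^\perp$ and $\bm{B}^\perp$ in place of $\bm{A}$ and $\bm{B}$, together with the observation that $(\bm{A}^\perp)^\perp=\bm{A}$.
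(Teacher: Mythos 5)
Your proposal is correct and matches the paper's (implicit) argument: the corollary is stated as an immediate consequence of the second items of Propositions~\ref{prop:blass} and~\ref{prop:dual dominating number}, which is exactly how you derive both inequalities. Nothing further is needed.
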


It is natural to ask whether the converse of this result holds, that is, if $\delta(\bm{A})\geq\delta(\bm{B})$ and $\delta(\bm{B}^\perp)\geq\delta(\bm{A}^\perp)$, does it follow that $\bm{A}\to\bm{B}$? In \cite{yiparaki}, Yiparaki gives counterexamples using infinite binary relations. In Section~\ref{sec:small}, we find counterexamples using finite binary relations.


The rest of this paper is organized as follows.
In Sections~\ref{sec:examples} and~\ref{sec:building}, we develop elementary results and explore general properties of binary relations and morphisms.
In Section~\ref{sec:lemmas}, we develop an algorithm which produces, from a finite binary relation, a somewhat canonical relation bimorphic to it, called its skeleton.
In Section \ref{sec:small}, we use a computer to calculate the skeletons of all binary relations $\bm{A}$ such that $|A_\pm|\leq6$, and to classify the binary relations such that $|A_\pm|\leq6$ up to bimorphism.
In Section \ref{sec:construction}, we briefly consider the structure of the morphism quasi-order on binary relations. In particular, we use insights from our classification to develop a method for constructing binary relations $\bm{A}$ with any given finite values of $\delta(\bm{A})$ and $\delta(\bm{A}^\perp)$.

While the focus of this paper is finite relations, we only use finiteness to ensure that the algorithm in Definition \ref{definition: skeleton} terminates.  Note that all the preceeding results work in the infinite case as well.

\textbf{Acknowledgement.} Many of the results in this paper were initially obtained by Rhett Barton while a graduate student at Boise State University, and appear in his master's thesis \cite{barton}.

\section{Some fundamental morphisms}
\label{sec:examples}

As a warm-up, we begin by addressing trivial cases when the dominating number is $1$ or $\infty$.

\begin{proposition} \;
  \label{prop: one infinity}
  \begin{itemize}
    \item If $\delta(\bm{B})=1$ and $\bm{A}$ is any binary relation, then $\bm{A}\to\bm{B}$.
    \item If $\delta(\bm{A})=\infty$ and $\bm{B}$ is any binary relation, then $\bm{A}\to\bm{B}$.
  \end{itemize}
\end{proposition}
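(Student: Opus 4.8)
The plan is to handle the two cases separately, in each case exhibiting an explicit morphism $\phi=(\phi_-,\phi_+)$ and checking the implication condition \eqref{eq:morphism}.

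For the first item, suppose $\delta(\bm{B})=1$, so there is a single point $\beta_0\in B_+$ which forms a $\bm{B}$-dominating family; that is, $b\mathrel{B}\beta_0$ for every $b\in B_-$. I would then define $\phi_-\colon B_-\to A_-$ to be an arbitrary function (if $A_-=\varnothing$ then $B_-=\varnothing$ as well, since $\delta(\bm{B})=1$ presupposes a dominating family exists and in particular that every element of $B_-$ is related to something; more carefully, if $B_-=\varnothing$ any pair of functions works, and if $B_-\neq\varnothing$ then $A_-$ must also be nonempty for a morphism to exist, but we only need $\bm A\to\bm B$, and if $A_-=\varnothing$ there is no obstruction), and $\phi_+\colon A_+\to B_+$ to be the constant function with value $\beta_0$. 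Then for any $b\in B_-$ and $a\in A_+$, the conclusion $b\mathrel{B}\phi_+(a)=\beta_0$ holds unconditionally, so the implication \eqref{eq:morphism} is satisfied vacuously on the right.

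For the second item, suppose $\delta(\bm A)=\infty$, meaning there is no $\bm A$-dominating family. Equivalently, there is some $a_0\in A_-$ such that $a_0\not\mathrel{A}\alpha$ for every $\alpha\in A_+$ --- indeed, if every element of $A_-$ were related to some element of $A_+$, then $A_+$ itself would be a dominating family. I would then define $\phi_-\colon B_-\to A_-$ to be the constant function with value $a_0$, and $\phi_+\colon A_+\to B_+$ to be arbitrary (again, if $B_+=\varnothing$ then $A_+=\varnothing$ since $a_0$ is unrelated to everything; the edge cases are routine). Then for any $b\in B_-$ and $a\in A_+$, the hypothesis $\phi_-(b)=a_0\mathrel{A}a$ is false, so the implication \eqref{eq:morphism} is satisfied vacuously on the left.

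The only subtlety, and the one step I would be careful about, is the handling of empty sets in the definitions of the two functions: one must ensure that whenever the target of $\phi_-$ or $\phi_+$ is forced to be empty, the corresponding source is empty as well, so that the ``arbitrary function'' really exists. In both cases this follows directly from the hypothesis ($\delta(\bm B)=1$ forces the relevant nonemptiness in the first case, and the existence of an unrelated $a_0\in A_-$ forces it in the second), so this is more a matter of phrasing than of genuine difficulty. No real obstacle is expected; the proposition is essentially an exercise in unwinding the definitions.
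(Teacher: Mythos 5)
Your first item is proved exactly as in the paper: take a single dominating point $y_0\in B_+$, let $\phi_+$ be the constant map with value $y_0$, and let $\phi_-$ be arbitrary, so that the conclusion of \eqref{eq:morphism} holds unconditionally. For the second item you argue directly, whereas the paper argues by duality: it observes that $\delta(\bm{A})=\infty$ is equivalent to $\delta(\bm{A}^\perp)=1$, applies the first item to get $\bm{B}^\perp\to\bm{A}^\perp$, and then invokes Proposition~\ref{prop:dual dominating number} to conclude $\bm{A}\to\bm{B}$. Your version unwinds that same computation: the point $a_0\in A_-$ related to nothing in $A_+$ (whose existence is exactly the content of $\delta(\bm{A})=\infty$) is precisely a one-element dominating family for $\bm{A}^\perp$, and the constant map $\phi_-\equiv a_0$ is the dual of the constant map used in the first item. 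Your route is self-contained and makes the vacuous-hypothesis mechanism explicit; the paper's route is shorter and illustrates the duality machinery it has just set up. Both are correct.

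One caution about your parenthetical edge-case discussion: the claim that $A_-=\varnothing$ forces $B_-=\varnothing$ is a non sequitur ($\delta(\bm{B})=1$ says nothing about $A_-$), and if $A_-=\varnothing$ while $B_-\neq\varnothing$ then no function $\phi_-\colon B_-\to A_-$ exists, so there genuinely is an obstruction in that degenerate case; your remark that ``there is no obstruction'' contradicts your own preceding sentence. The paper simply does not treat relations with an empty side (and immediately afterwards restricts to $1<\delta(\bm{A})<\infty$), so this does not affect the substance of your argument, but as written that aside is incorrect rather than merely informal.
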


For the first item, if $\delta(\bm{B})=1$ then there is some $y_0\in B^+$ satisfying $xBy_0$ for all $x\in B_-$. If $\bm{A}$ is any other relation, the map $\phi_+(a)=y_0$ will always define a morphism from $\bm{A}$ to $\bm{B}$. The second item follows from the first item, the fact that $\delta(\bm{A})=1\iff \delta(\bm{A}^\perp)=\infty$, and Proposition~\ref{prop:dual dominating number}. The second item, together with Proposition \ref{prop:blass}, explains why a relation without a dominating family is defined to have dominating number infinity.

From now on, unless otherwise specified, we will work only with binary relations $\bm{A}$ satisfying $1<\delta(\bm{A})<\infty$ (and thus $1<\delta(\bm{A}^\perp)<\infty$ as well).

In later analysis, we will have special use for the following key binary relations.

\begin{definition}
  A binary relation $\bm{A}=(A_-,A_+,A)$ is called an \emph{$n$-ladder} if $\lvert A_- \rvert=\lvert A_+ \rvert=n\geq1$ and $A$ is (the graph of) a bijective function from $A_-$ to $A_+$.
\end{definition}

If $\bm{A}$ is an $n$-ladder then $\delta(\bm{A})=n$; if additionally $n\geq2$ then $\delta(\bm{A}^\perp)=2$.

\begin{lemma}
  \label{lemma-ladders dominate}
  Let $\bm{A}$ be an $n$-ladder. If $\delta(\bm{B})\leq n$, then $\bm{A}\to\bm{B}$.
\end{lemma}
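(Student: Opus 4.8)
The plan is to build a morphism $\phi=(\phi_-,\phi_+)$ directly out of a witnessing dominating family for $\bm B$. First I would fix compatible enumerations of the ladder: write $A_-=\{x_1,\dots,x_n\}$ and $A_+=\{a_1,\dots,a_n\}$ so that the bijection $A$ is given by $x_i\mathrel A a_j\iff i=j$. Since $\delta(\bm B)\le n$, choose a $\bm B$-dominating family $\{y_1,\dots,y_m\}\subseteq B_+$ with $m\le n$, and pad it, repeating entries if necessary, to a list $y_1,\dots,y_n$ which still dominates.

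Next I would define the two halves of the morphism. Let $\phi_+\colon A_+\to B_+$ send $a_i\mapsto y_i$. To define $\phi_-\colon B_-\to A_-$, use the dominating property: for each $b\in B_-$ pick an index $i(b)$ with $b\mathrel B y_{i(b)}$, and set $\phi_-(b)=x_{i(b)}$.

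Finally I would verify condition \eqref{eq:morphism}. Suppose $\phi_-(b)\mathrel A a$ for some $b\in B_-$ and $a\in A_+$. Then $\phi_-(b)=x_{i(b)}$, and since $A$ is the graph of a bijection, the unique element of $A_+$ related to $x_{i(b)}$ is $a_{i(b)}$; hence $a=a_{i(b)}$ and $\phi_+(a)=y_{i(b)}$. By the choice of $i(b)$ we get $b\mathrel B y_{i(b)}=\phi_+(a)$, which is exactly what \eqref{eq:morphism} demands.

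I do not expect a genuine obstacle: the whole argument rests on the single observation that bijectivity of $A$ lets one invert along the relation, so that the constraint \eqref{eq:morphism} places on $\phi_-(b)$ concerns only the one codomain point $a_{i(b)}$, which $\phi_+$ has been arranged to send to a dominator of $b$. The only bookkeeping points are the padding of the dominating family to length exactly $n$ and the selection of the indices $i(b)$, neither of which causes any difficulty.
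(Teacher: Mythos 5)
Your proof is correct and follows essentially the same route as the paper's: map $A_+$ onto the dominating family and use the bijectivity of the ladder relation to send each $b\in B_-$ to the unique $A$-predecessor of a point of $A_+$ whose image dominates $b$. The padding of the family to length $n$ is just a cleaner way of saying what the paper phrases as taking $\phi_+$ surjective onto the dominating family, so there is nothing further to add.
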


\begin{proof}
  Let $b_1,\ldots, b_k\in B_+$ be a $B$-dominating family with $k\leq n$.  Let $\phi_+\colon A_+\to B_+$ be function which is surjective onto $\{b_1,\ldots, b_k\}$.  Now consider $d\in B_-$.  Suppose $d\mathrel{B}b_i$ and $\phi_+(a)=b_i$ where $a\in A_+$.  Then there is a unique $c\in A_-$ such that $c\mathrel{A}a$. Define $\phi_-(d)=c$.  Since $a$ is the unique element of $A_+$ satisfying $c\mathrel{A}a$, the definition of morphism is satisfied.
\end{proof}

\begin{corollary}
  \label{cor: ladders}
  If $\bm{A}$ is an $n$-ladder, $\bm{B}$ is a $k$-ladder, and $k\leq n$, then $\bm{A}\to\bm{B}$.
\end{corollary}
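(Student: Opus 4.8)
The plan is to derive Corollary~\ref{cor: ladders} directly from Lemma~\ref{lemma-ladders dominate}. If $\bm{B}$ is a $k$-ladder with $k\leq n$, then $B$ is the graph of a bijection $B_-\to B_+$, so the entire set $B_+$ is a $\bm{B}$-dominating family and hence $\delta(\bm{B})=k\leq n$. Now $\bm{A}$ is an $n$-ladder and $\delta(\bm{B})\leq n$, so Lemma~\ref{lemma-ladders dominate} applies verbatim and yields $\bm{A}\to\bm{B}$.

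In fact there is essentially nothing to prove beyond recording that a $k$-ladder has dominating number $k$, which was already noted in the remark immediately preceding Lemma~\ref{lemma-ladders dominate}. So the only ``step'' is to invoke that remark to check the hypothesis $\delta(\bm{B})\leq n$, and then cite the lemma. No case analysis, no estimates, and no obstacles are expected; the corollary is simply the special case of the lemma in which $\bm{B}$ is itself a ladder.

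If one prefers a self-contained argument avoiding the dominating-number language, one can instead exhibit the morphism directly: fix bijections witnessing that $\bm{A}$ and $\bm{B}$ are ladders, choose any surjection $\phi_+\colon A_+\to B_+$ (possible since $|A_+|=n\geq k=|B_+|$), and for $d\in B_-$ let $\phi_-(d)$ be the unique $c\in A_-$ with $c\mathrel{A}\phi_+(B\text{-image of }d)$. The verification of \eqref{eq:morphism} is then identical to the one in the proof of Lemma~\ref{lemma-ladders dominate}, using injectivity of the bijection defining $A$. Either way the argument is immediate, so the main (indeed only) point is simply to observe that the hypotheses of Lemma~\ref{lemma-ladders dominate} are met.
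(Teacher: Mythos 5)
Your proposal is correct and matches the paper's intent exactly: the corollary is stated without proof as an immediate consequence of Lemma~\ref{lemma-ladders dominate}, using the observation (recorded just before that lemma) that a $k$-ladder has dominating number $k\leq n$. Nothing further is needed.
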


Observe that the $2$-ladder is isomorphic to its own dual. This allows us to achieve the following result.

\begin{corollary}
  If $\delta(\bm{A})=\delta(\bm{A}^\perp)=2$, then $\bm{A}$ is bimorphic with the $2$-ladder.
\end{corollary}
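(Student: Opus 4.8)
The plan is to prove the two directions separately, using Lemma~\ref{lemma-ladders dominate} (and its dual) to handle each. Write $\bm{L}$ for the $2$-ladder, and recall that $\delta(\bm{L})=\delta(\bm{L}^\perp)=2$ since $\bm{L}\cong\bm{L}^\perp$.

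First I would show $\bm{L}\to\bm{A}$. Since $\delta(\bm{A})=2\leq2$ and $\bm{L}$ is a $2$-ladder, this is immediate from Lemma~\ref{lemma-ladders dominate} applied with $\bm{A}$ in the role of the ladder and $\bm{A}$ in the role of $\bm{B}$ (here we need $n=2$ and $\delta(\bm{B})=2\leq n$).

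Next I would show $\bm{A}\to\bm{L}$. The cleanest route is to dualize: by Proposition~\ref{prop:dual dominating number}, $\bm{A}\to\bm{L}$ is equivalent to $\bm{L}^\perp\to\bm{A}^\perp$, i.e.\ to $\bm{L}\to\bm{A}^\perp$ since $\bm{L}^\perp\cong\bm{L}$. Now $\delta(\bm{A}^\perp)=2$ by hypothesis, so again Lemma~\ref{lemma-ladders dominate} gives $\bm{L}\to\bm{A}^\perp$, hence $\bm{A}\to\bm{L}$. Combining the two directions, $\bm{A}$ and $\bm{L}$ are bimorphic.

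The only point requiring any care — and I expect this to be the main (minor) obstacle — is checking that the $2$-ladder really is isomorphic to its dual, so that the dominating numbers and morphisms transfer as claimed; but this is explicitly asserted in the paragraph preceding the corollary, and is a direct computation: if $A=\{(x_1,y_1),(x_2,y_2)\}$ is the graph of a bijection on a $2$-element set, then $\neg\Check A$ relates $y_i$ to $x_j$ exactly when $j\neq i$, which is again the graph of a bijection swapping the two points. Everything else is a direct invocation of the two lemmas already proved, so no further estimates are needed.
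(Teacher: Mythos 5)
Your proposal is correct and follows essentially the same route as the paper: apply Lemma~\ref{lemma-ladders dominate} to get $\bm{L}\to\bm{A}$, then apply it again to the dual (using that the $2$-ladder is self-dual) and invoke Proposition~\ref{prop:dual dominating number} to get $\bm{A}\to\bm{L}$. The only blemish is the slip where you write ``$\bm{A}$ in the role of the ladder''---you clearly mean $\bm{L}$---but the argument is otherwise identical to the paper's.
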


\begin{proof}
  Suppose $\delta(\bm{A})=\delta(\bm{A}^\perp)=2$, and let $\bm{B}$ be a $2$-ladder.  By Lemma~\ref{lemma-ladders dominate}, $\bm{B}\to\bm{A}$.  Since $\bm{B}^\perp$ is also a $2$-ladder, we have $\bm{B}^\perp\to\bm{A}^\perp$.  So by Proposition~\ref{prop:dual dominating number}, $\bm{A}\to\bm{B}$.
\end{proof}

\begin{corollary}
  \label{cor:dual bimorphic}
  If $\delta(\bm{A})=\delta(\bm{A}^\perp)=2$, then $\bm{A}$ and $\bm{A}^\perp$ are bimorphic.
\end{corollary}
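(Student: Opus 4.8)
The plan is to reduce everything to the previous corollary by exploiting the fact that the hypothesis $\delta(\bm{A})=\delta(\bm{A}^\perp)=2$ is self-dual. First I would observe that, since $(\bm{A}^\perp)^\perp=\bm{A}$, the relation $\bm{A}^\perp$ itself satisfies $\delta(\bm{A}^\perp)=2$ and $\delta((\bm{A}^\perp)^\perp)=\delta(\bm{A})=2$. So the hypothesis of the corollary immediately preceding this one applies verbatim to $\bm{A}^\perp$ as well as to $\bm{A}$.

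The second step is to apply that corollary twice: once to $\bm{A}$, yielding that $\bm{A}$ is bimorphic with the $2$-ladder, and once to $\bm{A}^\perp$, yielding that $\bm{A}^\perp$ is bimorphic with the $2$-ladder. Since morphisms compose, bimorphism is an equivalence relation, so being bimorphic with a common object (the $2$-ladder) forces $\bm{A}$ and $\bm{A}^\perp$ to be bimorphic with each other, which is the claim.

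There is essentially no obstacle; the only point requiring a word of justification is the legitimacy of invoking the earlier corollary with $\bm{A}^\perp$ in the role of $\bm{A}$, and this is immediate from $(\bm{A}^\perp)^\perp=\bm{A}$. If one prefers to avoid citing the unlabeled corollary, an equivalent direct route is available: by Lemma~\ref{lemma-ladders dominate} the $2$-ladder $\bm{B}$ maps to $\bm{A}$, and by Proposition~\ref{prop:dual dominating number} this gives $\bm{A}^\perp\to\bm{B}^\perp$; since the $2$-ladder is isomorphic to its own dual, $\bm{B}^\perp$ is again a $2$-ladder, and composing with $\bm{B}\to\bm{A}$ (together with its dual $\bm{A}^\perp\to\bm{B}^\perp$) through the $2$-ladder produces morphisms in both directions between $\bm{A}$ and $\bm{A}^\perp$. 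Routing through the preceding corollary is the shorter presentation, so that is the one I would write up.
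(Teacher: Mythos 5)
Your argument is correct and matches the paper's intent exactly: the paper states this as an immediate consequence of the preceding corollary, which (since the hypothesis is self-dual) shows both $\bm{A}$ and $\bm{A}^\perp$ are bimorphic with the $2$-ladder, and transitivity of bimorphism finishes it. Nothing further is needed.
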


\section{Building morphisms}
\label{sec:building}

In this section we explore several situations where a morphism can be constructed from one binary relation to another. In the following lemma, a \emph{homomorphism} from $\bm{A}$ to $\bm{B}$ is a function $\psi\colon A_-\cup A_+\to B_-\cup B_+$ with the property that and $\psi(A_-)\subseteq B_-$, $\psi(A_+)\subseteq B_+$, and $a\mathrel{A}a'\iff \psi(a)\mathrel{B}\psi(a')$.

\begin{lemma}
  \label{lem:surjhomomorphism}
  Suppose $\psi$ is a homomorphism from $\bm{A}$ to $\bm{B}$ with the property that $\psi\vert_{A_-}$ is surjective onto $B_-$.  Then $\bm{A}\to\bm{B}$.
\end{lemma}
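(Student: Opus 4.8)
The plan is to build the morphism $\phi=(\phi_-,\phi_+)$ directly from the homomorphism $\psi$. Since $\psi\vert_{A_-}$ is surjective onto $B_-$, for each $b\in B_-$ we may choose some $a\in A_-$ with $\psi(a)=b$; define $\phi_-(b)$ to be such an $a$ (using a choice function if the sets are infinite). This gives $\phi_-\colon B_-\to A_-$ with the key property that $\psi(\phi_-(b))=b$ for all $b\in B_-$. For the other coordinate, we simply set $\phi_+=\psi\vert_{A_+}\colon A_+\to B_+$, which is well-defined because $\psi(A_+)\subseteq B_+$.

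The main step is then to verify the morphism condition \eqref{eq:morphism}. So fix $b\in B_-$ and $a\in A_+$ and suppose $\phi_-(b)\mathrel{A}a$. Writing $a'=\phi_-(b)\in A_-$, this says $a'\mathrel{A}a$, and since $\psi$ is a homomorphism we get $\psi(a')\mathrel{B}\psi(a)$. But $\psi(a')=\psi(\phi_-(b))=b$ by construction of $\phi_-$, and $\psi(a)=\phi_+(a)$; hence $b\mathrel{B}\phi_+(a)$, which is exactly what \eqref{eq:morphism} requires. This completes the verification.

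I do not anticipate a serious obstacle here: the argument is essentially a bookkeeping exercise in unwinding the definitions, and the only subtlety is making sure $\phi_-$ is chosen as a genuine section of $\psi\vert_{A_-}$, which is precisely what surjectivity buys us. One minor point to be careful about is that the homomorphism condition $a\mathrel{A}a'\iff\psi(a)\mathrel{B}\psi(a')$ is only literally stated for elements of $A_-\cup A_+$, so one should note that $a'\in A_-$ and $a\in A_+$ are covered; in fact only the forward direction of the biconditional is used in the proof.
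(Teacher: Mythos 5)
Your proposal is correct and matches the paper's proof exactly: the paper also selects a section $g\colon B_-\to A_-$ of $\psi\vert_{A_-}$ and checks that $(g,\psi\vert_{A_+})$ is a morphism, leaving the verification of \eqref{eq:morphism} to the reader. You have simply written out that verification in full, which is accurate.
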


\begin{proof}
  Select a function $g\colon B_-\to A_-$ such that $\psi\vert_{A_-}\circ g=\id_{B_-}$.  Then it is not difficult to check that $(g,\psi\vert_{A_+})$ is a morphism from $\bm{A}$ to $\bm{B}$.
\end{proof}

We next examine some natural cases where making a small change to a binary relation results in a new binary relation such that there exists a morphism from the old to the new.

\begin{lemma}\label{lemma:identity morphisms}
  Let $\bm{A}$ and $\bm{B}$ be binary relations. Suppose that any of the following conditions holds:
  \begin{enumerate}
    \item $A_-=B_-$, $A_+=B_+$, and $A\subseteq B$
    \item $\bm{B}$ is an induced subrelation obtained by deleting points from $A_-$. That is, $\bm{B}=(S,A_+,B)$ where $S\subseteq A_-$ and $c\mathrel{B}a\iff c\mathrel{A}a$ for all $c\in S, a\in A_+$.
    \item $\bm{A}$ is an induced subrelation obtained by deleting points from $B_+$. That is, $\bm{A}=(B_-,S, A)$ where $S\subseteq B_+$ and $c\mathrel{A}a\iff c\mathrel{B}a$ for all $c\in B_-, a\in S$.
  \end{enumerate}
  Then $\bm{A}\to\bm{B}$.
\end{lemma}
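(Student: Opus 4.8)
The plan is to handle each of the three cases separately by exhibiting an explicit morphism $\phi=(\phi_-,\phi_+)$, in each instance taking one or both of $\phi_\pm$ to be an identity or inclusion map and then verifying the implication \eqref{eq:morphism}.

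For case (1), since $A_-=B_-$ and $A_+=B_+$, I would simply take $\phi_-=\id_{B_-}$ and $\phi_+=\id_{A_+}$. Then the morphism condition $\phi_-(b)\mathrel{A}a\implies b\mathrel{B}\phi_+(a)$ becomes $b\mathrel{A}a\implies b\mathrel{B}a$, which is exactly the hypothesis $A\subseteq B$. For case (2), I would take $\phi_-\colon B_-\to A_-$ to be the inclusion of $S$ into $A_-$ and $\phi_+=\id_{A_+}$; the morphism condition becomes, for $b\in S$ and $a\in A_+$, that $b\mathrel{A}a\implies b\mathrel{B}a$, which holds (indeed with equivalence) by the definition of the induced subrelation. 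Alternatively, case (2) follows from Lemma~\ref{lem:surjhomomorphism}, since the inclusion $S\cup A_+\hookrightarrow A_-\cup A_+$ is a homomorphism from $\bm{B}$ to $\bm{A}$ whose restriction to $S$ is a bijection onto $S$ — wait, that gives a morphism the wrong direction, so I will just do the direct verification. For case (3), dually, I would take $\phi_-=\id_{B_-}$ and $\phi_+\colon S\to B_+$ the inclusion map; the morphism condition becomes, for $b\in B_-$ and $a\in S$, that $b\mathrel{A}a\implies b\mathrel{B}a$, again immediate from the definition of the induced subrelation.

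There is essentially no obstacle here: each case is a one-line check that the relevant implication reduces to the stated hypothesis. The only mild subtlety is bookkeeping — making sure in each case that $\phi_-$ has the correct domain and codomain ($B_-\to A_-$) and similarly for $\phi_+$ ($A_+\to B_+$), and tracking which direction the inclusion goes. I would present the proof as three short paragraphs, one per case, each naming the two functions and then citing \eqref{eq:morphism} together with the appropriate hypothesis ($A\subseteq B$ in (1), the biconditional defining the induced subrelation in (2) and (3)). If brevity is desired, cases (2) and (3) can be noted as dual to each other via Proposition~\ref{prop:dual dominating number}, observing that deleting points from $A_-$ in $\bm{A}$ corresponds to deleting points from $(A_+)$ in $\bm{A}^\perp$, so that (3) follows from (2) applied to duals; but the direct argument is so short that I would likely just write all three out.
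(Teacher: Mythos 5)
Your proof is correct and is essentially the paper's own argument: in each case the identity/inclusion maps $(\id_{B_-},\id_{A_+})$ form the morphism, and the condition \eqref{eq:morphism} reduces directly to the stated hypothesis. The paper states this in one line; your three-case verification just spells out the same check.
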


\begin{proof}
  In each case the identity (or inclusion) maps $(\id_{B_-},\id_{A_+})$ together form a morphism. 
\end{proof}





In the rest of this section, we define and study the disjoint union of two binary relations.

\begin{definition}
  Let $\bm{A}$ and $\bm{B}$ be binary relations.  Then $\bm{A}\oplus\bm{B}=(A_-\sqcup B_-, A_+\sqcup B_+, A\oplus B)$, where $c(A\oplus B)d$ if and only if $c\mathrel{A}d$ or $c\mathrel{B}d$.
\end{definition}

The next two result show that morphisms behave reasonably well with respect to disjoint unions.

\begin{lemma}
  Let $\bm{A},\bm{A'},\bm{B},\bm{B'}$ be binary relations.  If $\bm{A}\to\bm{B}$ and $\bm{A'}\to\bm{B'}$, then $\bm{A}\oplus\bm{A'}\to\bm{B}\oplus\bm{B'}$.
\end{lemma}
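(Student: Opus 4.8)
The plan is to build the morphism $\bm{A}\oplus\bm{A'}\to\bm{B}\oplus\bm{B'}$ componentwise from the given morphisms $\phi=(\phi_-,\phi_+)\colon\bm{A}\to\bm{B}$ and $\phi'=(\phi'_-,\phi'_+)\colon\bm{A'}\to\bm{B'}$. Since $\bm{A}\oplus\bm{A'}$ has negative part $A_-\sqcup A'_-$ and positive part $A_+\sqcup A'_+$, and similarly for $\bm{B}\oplus\bm{B'}$, I would define $\Phi_-\colon B_-\sqcup B'_-\to A_-\sqcup A'_-$ to be $\phi_-$ on the copy of $B_-$ and $\phi'_-$ on the copy of $B'_-$, and dually define $\Phi_+\colon A_+\sqcup A'_+\to B_+\sqcup B'_+$ to be $\phi_+$ on $A_+$ and $\phi'_+$ on $A'_+$. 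The point is that $\Phi_-$ respects the disjoint union decomposition, so it never carries an element of $B_-$ into $A'_-$ or vice versa.

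Next I would verify the morphism condition \eqref{eq:morphism} for $\Phi=(\Phi_-,\Phi_+)$. Fix $b\in B_-\sqcup B'_-$ and $a\in A_+\sqcup A'_+$ with $\Phi_-(b)\mathrel{(A\oplus A')}a$. By the definition of $A\oplus A'$, either $\Phi_-(b)\mathrel{A}a$ or $\Phi_-(b)\mathrel{A'}a$; in the first case $\Phi_-(b)\in A_-$ forces $b\in B_-$ (and $a\in A_+$) by the way $\Phi_-$ was defined, so $\Phi_-(b)=\phi_-(b)$ and $\Phi_+(a)=\phi_+(a)$, whence $b\mathrel{B}\phi_+(a)$ by the morphism property of $\phi$, hence $b\mathrel{(B\oplus B')}\Phi_+(a)$. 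The second case is symmetric using $\phi'$. This is really just a careful bookkeeping of which summand each point lives in.

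The only mild subtlety — which I would call out but not belabor — is handling the edge case where, say, $b\in B_-$ but $a\in A'_+$ (a point from the ``wrong'' summand): then $\Phi_-(b)=\phi_-(b)\in A_-$ cannot be related to $a\in A'_+$ by either $A$ or $A'$ (since $A\subseteq A_-\times A_+$ and $A'\subseteq A'_-\times A'_+$ live in disjoint product sets), so the hypothesis $\Phi_-(b)\mathrel{(A\oplus A')}a$ is vacuous and there is nothing to check. This observation is what makes the componentwise definition work, so I expect it to be the main (and quite minor) obstacle. The proof is otherwise routine diagram-chasing, analogous to Lemma~\ref{lem:surjhomomorphism} and the identity-morphism constructions above.
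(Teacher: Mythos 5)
Your proposal is correct and is the same disjoint-union-of-morphisms construction the paper uses; the paper simply states that $(\phi_-\sqcup\phi'_-,\phi_+\sqcup\phi'_+)$ works, while you additionally spell out the routine case analysis (including the vacuous cross-summand case).
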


\begin{proof}
  Suppose $\phi$ witnesses $\bm{A}\to\bm{B}$ and $\phi'$ witnesses $\bm{A'}\to\bm{B'}$.  Then $(\phi_-\sqcup\phi'_-,\phi_+\sqcup\phi'_+)$ witnesses $\bm{A}\oplus\bm{A'}\to\bm{B}\oplus\bm{B'}$.
\end{proof}

\begin{lemma}
  \label{lem-disjoint union converse}
  Suppose $\bm{C},\bm{B},\bm{B'}$ are relations whose underlying sets are all disjoint, and $\bm{C}\to\bm{B}\oplus\bm{B'}$.  Then there exist induced subrelations $\bm{A},\bm{A'}$ of $\bm{C}$ such that $C_-=A_-\sqcup A'_-$, $C_+=A_+\sqcup A'_+$, $\bm{A}\to\bm{B}$, and $\bm{A'}\to\bm{B'}$.
\end{lemma}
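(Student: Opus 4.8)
The plan is to show that a single morphism $\phi=(\phi_-,\phi_+)\colon\bm{C}\to\bm{B}\oplus\bm{B'}$ already ``splits'' along a partition of the underlying sets of $\bm{C}$. First I would partition $C_+$ using $\phi_+$: set $A_+:=\phi_+^{-1}(B_+)$ and $A'_+:=\phi_+^{-1}(B'_+)$, which is a genuine partition because $B_+$ and $B'_+$ are disjoint. The matching partition of $C_-$ will come from $\phi_-$, but only after a small observation.

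That observation is: for every $b\in B_-$, each $C$-successor of $\phi_-(b)$ lies in $A_+$, and symmetrically, for every $b'\in B'_-$, each $C$-successor of $\phi_-(b')$ lies in $A'_+$. Indeed, if $\phi_-(b)\mathrel{C}z$ then \eqref{eq:morphism} gives $b\mathrel{(B\oplus B')}\phi_+(z)$; since $b\in B_-$ and $B_-\cap B'_-=\emptyset$, this can only hold via $b\mathrel{B}\phi_+(z)$, forcing $\phi_+(z)\in B_+$, i.e.\ $z\in A_+$. A consequence is that $\phi_-(B_-)$ and $\phi_-(B'_-)$ are \emph{disjoint}: a common value $c$ would have all of its $C$-successors in $A_+\cap A'_+=\emptyset$, hence no $C$-successor at all, so that $\bm{C}$ has no dominating family and $\delta(\bm{C})=\infty$, contradicting our standing assumption. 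Granting this, I would set $A'_-:=\phi_-(B'_-)$ and $A_-:=C_-\setminus\phi_-(B'_-)$, so that $C_-=A_-\sqcup A'_-$ while $\phi_-(B'_-)\subseteq A'_-$ and $\phi_-(B_-)\subseteq A_-$. Let $\bm{A}$ and $\bm{A'}$ be the induced subrelations of $\bm{C}$ on $A_-\times A_+$ and $A'_-\times A'_+$; the required identities $C_-=A_-\sqcup A'_-$ and $C_+=A_+\sqcup A'_+$ hold by construction.

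It remains to check $\bm{A}\to\bm{B}$ and $\bm{A'}\to\bm{B'}$. For the first I would verify that $(\phi_-\vert_{B_-},\phi_+\vert_{A_+})$ is a morphism: it has the right domains and codomains since $\phi_-(B_-)\subseteq A_-$ and $\phi_+(A_+)\subseteq B_+$, and if $\phi_-(b)\mathrel{A}z$ --- equivalently $\phi_-(b)\mathrel{C}z$, as $\bm{A}$ is an induced subrelation of $\bm{C}$ --- then \eqref{eq:morphism} for $\phi$ gives $b\mathrel{(B\oplus B')}\phi_+(z)$, which as before reduces to $b\mathrel{B}\phi_+(z)$. The argument that $(\phi_-\vert_{B'_-},\phi_+\vert_{A'_+})$ is a morphism $\bm{A'}\to\bm{B'}$ is the mirror image.

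The one genuinely delicate point is the disjointness of $\phi_-(B_-)$ and $\phi_-(B'_-)$: without it there is no way to route $\phi_-(B_-)$ into $A_-$ and $\phi_-(B'_-)$ into $A'_-$ simultaneously, and in fact the statement fails in general (take $\bm{C}$ to be a single point of $C_-$ related to nothing, with $\bm{B},\bm{B'}$ similar), so the proof genuinely uses the standing assumption $\delta(\bm{C})<\infty$. Everything else is a routine unwinding of the definitions of the disjoint union, of induced subrelation, and of the morphism condition.
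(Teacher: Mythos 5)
Your proof is correct, and it shares the paper's basic strategy --- use the data of $\phi$ itself to split $\bm{C}$ and then restrict $\phi$ to the two pieces --- but it differs in two details that are worth recording. First, the paper partitions $C_+$ not as $\phi_+^{-1}(B_+)\sqcup\phi_+^{-1}(B'_+)$ but as the set of $C$-successors of $A_-=\phi_-(B_-)$ versus the rest; consequently its second morphism is not a pure restriction of $\phi$, and its $\psi_+$ must be defined by cases, sending those points of $A'_+$ with no $C$-predecessor in $\phi_-(B'_-)$ to an arbitrary element of $B'_+$ (harmless, since the morphism condition is vacuous there). Your choice of partition makes both morphisms literal restrictions of $(\phi_-,\phi_+)$, which is tidier. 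Second, and more substantively, you isolate and prove the disjointness of $\phi_-(B_-)$ and $\phi_-(B'_-)$, which is precisely what guarantees that the two restrictions of $\phi_-$ land in the intended pieces of $C_-$. The paper's proof needs the same fact --- its morphism from $\bm{A'}$ to $\bm{B'}$ requires $\phi_-(B'_-)\subseteq A'_-=C_-\setminus\phi_-(B_-)$ --- but does not verify it. Your observation that a common value of $\phi_-$ would be a point of $C_-$ with no $C$-successors, contradicting the standing assumption $1<\delta(\bm{C})<\infty$, supplies the missing verification, and your degenerate example (a successor-free point in $C_-$) shows that some such hypothesis is genuinely required. In short, your argument is a correct variant of the paper's and is, if anything, written more carefully at the one delicate point.
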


\begin{proof}
  Suppose $\phi$ is a morphism from $\bm{C}$ to $\bm{B}\oplus\bm{B'}$.  Define $A_-=\phi_-(B_-)$ and $A'_-=C_-\setminus A_-$.  Next define
  \[A_+=\{a\in C_+\mid \text{ there exists }c\in A_-\text{ such that }c\mathrel{C}a\}
  \]
  and $A'_+=C_+\setminus A_+$.
  
  It is easy to see that $(\phi_-\vert_{B_-},\phi_+\vert_{A_+})$ is a morphism from $\bm{A}$ to $\bm{B}$.

  To construct a morphism $\psi$ from $\bm{A'}$ to $\bm{B'}$, first set $\psi_-=\phi_-\vert_{B_-}$.  Next consider  $a\in A_+$.  If there is some $c\in\phi_-(B_-)$ with $cA'a$, then set $\psi_+(a)=\phi_+(a)$.  Otherwise assign $\psi_+(a)$ to any element of $B'_+$.
\end{proof}

In Lemma \ref{lem-disjoint union converse}, we cannot guarantee $\bm{C}=\bm{A}\oplus \bm{A'}$. See Figure~\ref{fig:morphism vs union} for a counterexample.

\begin{figure}[!h]
  \centering
  \begin{tikzpicture}[scale=.5]
    \draw[fill=black] (0,0) circle (2pt);
    \draw[fill=black] (2,0) circle (2pt);
    \draw[fill=black] (4,0) circle (2pt);
    \draw[fill=black] (6,0) circle (2pt);
    \draw[fill=black] (8,0) circle (2pt);
    \draw[fill=black] (0,-4) circle (2pt);
    \draw[fill=black] (2,-4) circle (2pt);
    \draw[fill=black] (4,-4) circle (2pt);
    \draw[fill=black] (6,-4) circle (2pt);
    \draw[fill=black] (8,-4) circle (2pt);
    \draw[fill=black] (10,-4) circle (2pt);
    \draw[thick, OI1] (0,0) -- (0,-4);
    \draw[thick, OI1] (2,0) -- (2,-4);
    \draw[thick, OI2] (4,0) -- (4,-4);
    \draw[thick, OI2] (6,0) -- (6,-4);
    \draw[thick, OI2] (8,0) -- (8,-4);
    \draw[thick, OI3] (0,0) -- (10,-4);
    \draw[thick, OI3] (2,0) -- (10,-4);
    \draw[thick, OI3] (4,0) -- (10,-4);
    \draw[thick, OI3] (6,0) -- (10,-4);
    \draw[thick, OI3] (8,0) -- (10,-4);
    \node at (-1,0) {$C_{+}$};
    \node at (-1,-4) {$C_{-}$};
    \node at (-1,-2) {$C$};
  \end{tikzpicture}
  \qquad\qquad
  \begin{tikzpicture}[scale=.5]
    \draw[fill=black] (0,0) circle (2pt);
    \draw[fill=black] (2,0) circle (2pt);
    \draw[fill=black] (0,-4) circle (2pt);
    \draw[fill=black] (2,-4) circle (2pt);
    \draw[thick, OI1] (0,0) -- (0,-4);
    \draw[thick, OI1] (2,0) -- (2,-4);
    \node at (-1,0) {$B_{+}$};
    \node at (-1,-4) {$B_{-}$};
    \node at (-1,-2) {$B$};
  \end{tikzpicture}
  \qquad\qquad
  \begin{tikzpicture}[scale=.5]
    \draw[fill=black] (0,0) circle (2pt);
    \draw[fill=black] (2,0) circle (2pt);
    \draw[fill=black] (4,0) circle (2pt);
    \draw[fill=black] (0,-4) circle (2pt);
    \draw[fill=black] (2,-4) circle (2pt);
    \draw[fill=black] (4,-4) circle (2pt);
    \draw[thick, OI2] (0,0) -- (0,-4);
    \draw[thick, OI2] (2,0) -- (2,-4);
    \draw[thick, OI2] (4,0) -- (4,-4);
    \node at (-1,0) {$B'_{+}$};
    \node at (-1,-4) {$B'_{-}$};
    \node at (-1,-2) {$B'$};
  \end{tikzpicture}
  \caption{There relations $\bm{C}$ and $\bm{B}\oplus\bm{B'}$ are bimorphic, but $\bm{C}$ is not the disjoint union of nonempty subrelations.\label{fig:morphism vs union}}
\end{figure}

\section{The skeleton of a relation}
\label{sec:lemmas}

In this section, we define the skeleton of a finite binary relation $\bm{A}$. This will be a binary relation which is obtainable from $\bm{A}$ by deleting points, which is bimorphic to $\bm{A}$, and which is minimal with respect to certain deletion operations.


\begin{definition}
  Let $\bm{A}$ be a binary relation, and $a\in A_-\cup A_+$. The \emph{neighborhood} of $a$, denoted $N_{\bm{A}}(a)$, is defined to be the set of all $b\in A_-\cup A_+$ such that $a\mathrel{A}b$ or $b\mathrel{A}a$.
\end{definition}

Observe that if $a\in A_-$, then $N_{\bm{A}}(a)\subseteq A_+$, and vice versa.

\begin{definition}
  Let $\bm{A}$ be a binary relation, and $a,b\in A_-\cup A_+$.
  \begin{enumerate}
    \item We say $a$ is \emph{non-maximal} if there exists $c\in A_-\cup A_+$ such that $N_{\bm{A}}(a)\subsetneq N_{\bm{A}}(c)$. 
    \item We say $a$ is \emph{non-minimal} if there exists $c\in A_-\cup A_+$ so that $N_{\bm{A}}(c)\subsetneq N_{\bm{A}}(a)$.
    \item We say $a$ and $b$ are \emph{twins} if $N_{\bm{A}}(a)=N_{\bm{A}}(b)$.
  \end{enumerate}
\end{definition}

\begin{lemma}
  Suppose $\bm{A'}$ is obtained from $\bm{A}$ by deleting either
  \begin{enumerate}
    \item a non-minimal point from $A_-$, or
    \item a non-maximal point from $A_+$, or
    \item any point from $A_-\cup A_+$ which has a twin.
  \end{enumerate}
  Then $\bm{A}$ and $\bm{A'}$ are bimorphic.
\end{lemma}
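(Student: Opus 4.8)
The plan is to check, in each case, that there is a morphism $\bm{A}\to\bm{A'}$ and a morphism $\bm{A'}\to\bm{A}$, taking one direction from an earlier result and building the other by hand. In every case $\bm{A'}$ is an induced subrelation of $\bm{A}$ obtained by deleting a single point, so Lemma~\ref{lemma:identity morphisms} supplies one direction at once: if the point is deleted from $A_-$ (case~(1), and the sub-case of case~(3) in which the deleted point lies in $A_-$, call it~(3a)) then $\bm{A}\to\bm{A'}$ by part~(2); if it is deleted from $A_+$ (case~(2), and the sub-case~(3b) in which the point lies in $A_+$) then $\bm{A'}\to\bm{A}$ by part~(3). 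It remains to produce the reverse morphism in each case.

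Each reverse morphism is obtained by ``collapsing'' the deleted point onto a suitable survivor and leaving everything else fixed. For case~(2): we delete a non-maximal $a_0\in A_+$, so there is $c_0\in A_+$ with $N_{\bm A}(a_0)\subsetneq N_{\bm A}(c_0)$, and $c_0\neq a_0$, so $c_0\in A'_+$. Put $\phi_-=\id_{A_-}$ and let $\phi_+\colon A_+\to A'_+$ be the identity off $a_0$ with $\phi_+(a_0)=c_0$. Then $(\phi_-,\phi_+)$ is a morphism $\bm{A}\to\bm{A'}$: if $b\in A_-$, $a\in A_+$ and $b\mathrel{A}a$, then for $a\neq a_0$ this reads $b\mathrel{A'}a=\phi_+(a)$, and for $a=a_0$ we have $b\in N_{\bm A}(a_0)\subseteq N_{\bm A}(c_0)$, so $b\mathrel{A'}c_0=\phi_+(a_0)$. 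Case~(1) is the mirror image: deleting a non-minimal $a_0\in A_-$, choose $c_0\in A_-$ with $N_{\bm A}(c_0)\subseteq N_{\bm A}(a_0)$, take $\phi_+=\id_{A_+}$ and $\phi_-$ the identity off $a_0$ with $\phi_-(a_0)=c_0$, and the same one-line check gives $(\phi_-,\phi_+)\colon\bm{A'}\to\bm{A}$. Case~(3) is the instance of these in which $c_0$ is the twin of $a_0$, so that the neighborhood inclusion is an equality; this argument is insensitive to whether $a_0$ lies in $A_-$ or $A_+$. (One could also halve the work by duality: via Proposition~\ref{prop:dual dominating number}, passing to $\bm{A}^\perp$ turns neighborhoods into complements and reverses $\subsetneq$, so cases~(1) and~(3a) are the duals of cases~(2) and~(3b).)

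The one point requiring care is that the collapse target $c_0$ must lie on the same side of the relation as the deleted point---in $A_-$ for cases~(1) and~(3a), in $A_+$ for cases~(2) and~(3b)---so that $\phi_-$, respectively $\phi_+$, has the intended codomain. This is exactly what the defining conditions provide once the neighborhood comparisons in the definitions of non-maximal, non-minimal, and twin are understood between points on a common side (the natural reading, as $N_{\bm A}(x)\subseteq A_+$ when $x\in A_-$ and $N_{\bm A}(x)\subseteq A_-$ when $x\in A_+$). Granting this, every verification collapses to a single use of the defining implication~\eqref{eq:morphism}, so the only real content is choosing the maps correctly: the identity away from the deleted point, and the deleted point sent to its witness or twin.
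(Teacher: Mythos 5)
Your proof is correct and takes essentially the same approach as the paper's: one direction is supplied by Lemma~\ref{lemma:identity morphisms}, and the reverse morphism is the collapse map that fixes everything except the deleted point, which is sent to the witness of non-minimality/non-maximality or to its twin. The paper writes out only the $A_-$ case and disposes of the rest by symmetry/duality, whereas you treat both sides explicitly, but the underlying argument is identical.
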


\begin{proof}
  Suppose $c\in A_-$ is non-minimal or has a twin, and that $\bm{A'}$ is obtained from $\bm{A}$ by deleting $c$.  By Lemma \ref{lemma:identity morphisms}(b), $\bm{A}\to\bm{A'}$.  

  Next, choose $c'\in A_-$, $c'\neq c$, so that $N_{\bm{A}}(c')\subseteq N_{\bm{A}}(c)$.  Define $\phi_-:A_-\to A'_-$ by $\phi_-(c)=c'$, and $\phi_-(x)=x$ when $x\neq c$.  Then $(\phi_-,\id_{A_+})$ is a morphism from $\bm{A'}$ to $\bm{A}$.

  The proof of the remaining case is similar.  We note it may also be established using duality, since a point in $A_-$ is non-minimal if and only if it is non-maximal in $A^{\perp}_+$.
\end{proof}

Given a relation $\bm{A}$, let us call a point \emph{deletable} if it is a non-minimal point from $A_-$ or a non-maximal point from $A_+$.  Deleting these points may create new deleteable points.  However, the next lemma will enable us to describe a coherent deletion algorithm.

\begin{lemma}
  Suppose $\bm{A'}$ is obtained from $\bm{A}$ by deleting a deletable point $a$. If $b\neq a$ is deletable in $\bm{A}$, then $b$ either remains deletable in $\bm{A'}$, or has a twin in $\bm{A'}$.
\end{lemma}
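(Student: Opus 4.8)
The plan is to reduce to the case $a\in A_-$ and then split on whether $b$ lies in $A_-$ or $A_+$. Replacing $\bm{A}$ by $\bm{A}^\perp$ if necessary — which swaps the roles of $A_-$ and $A_+$, preserves deletability and the twin relation, and commutes with deleting a point, using the duality facts noted above — we may assume $a\in A_-$, so that $\bm{A'}=(A_-\setminus\{a\},\,A_+,\,A\cap((A_-\setminus\{a\})\times A_+))$. The first step is to record how neighborhoods move: deleting $a$ changes neighborhoods only on the $A_+$ side, so $N_{\bm{A'}}(x)=N_{\bm{A}}(x)$ for $x\in A_-\setminus\{a\}$, while $N_{\bm{A'}}(y)=N_{\bm{A}}(y)\setminus\{a\}$ for $y\in A_+$.

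Case $b\in A_-$. Then $b$ is non-minimal, so I would pick $c\in A_-$ with $N_{\bm{A}}(c)\subsetneq N_{\bm{A}}(b)$. If $c\neq a$, then $c$ survives in $\bm{A'}$ and, since $A_-$-neighborhoods are unchanged, $N_{\bm{A'}}(c)\subsetneq N_{\bm{A'}}(b)$, so $b$ is still non-minimal. If $c=a$, then $N_{\bm{A}}(a)\subsetneq N_{\bm{A}}(b)$, and because $a$ is itself deletable there is $c'\in A_-$ with $N_{\bm{A}}(c')\subsetneq N_{\bm{A}}(a)$; by strictness $c'\neq a$, so $c'$ survives, and transitivity gives $N_{\bm{A'}}(c')\subsetneq N_{\bm{A'}}(b)$, so again $b$ remains deletable.

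Case $b\in A_+$. Then $b$ is non-maximal, so pick $c\in A_+$ with $N_{\bm{A}}(b)\subsetneq N_{\bm{A}}(c)$. Deleting $a$ removes $a$ (if present) from both neighborhoods, so $N_{\bm{A'}}(b)=N_{\bm{A}}(b)\setminus\{a\}\subseteq N_{\bm{A}}(c)\setminus\{a\}=N_{\bm{A'}}(c)$; this inclusion stays proper unless $N_{\bm{A}}(c)\setminus N_{\bm{A}}(b)=\{a\}$. In the proper case, $c$ still witnesses that $b$ is non-maximal in $\bm{A'}$. In the remaining case $N_{\bm{A'}}(b)=N_{\bm{A'}}(c)$ with $c\neq b$ (by strictness of the original inclusion), so $b$ acquires a twin in $\bm{A'}$.

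The only place where anything genuinely happens is the last case: deleting $a$ can collapse the strict inclusion witnessing $b$'s non-maximality to an equality, killing the deletability of $b$ — but such a collapse is exactly an equality of neighborhoods, i.e.\ a twin, which is the alternative the statement permits. So this is the main (and essentially only) obstacle, and it is resolved as soon as one writes out precisely when the inclusion fails to remain strict. The same-side cases carry no such risk, since deleting $a\in A_-$ leaves all $A_-$-neighborhoods untouched and one merely replaces the witness $a$, when it happens to be the witness, by a strictly smaller element of the chain below it, which exists because $a$ is deletable.
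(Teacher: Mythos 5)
Your proof is correct and takes essentially the same route as the paper's: a direct neighborhood-tracking case analysis whose key observation is that the only way $b$'s deletability can fail is for the witnessing strict inclusion to collapse to an equality of neighborhoods, which is exactly a twin. The organization is simply the dual of the paper's (the paper fixes the side of $b$ and cases on the position of $a$ relative to $b$ and a witness $z$, whereas you fix $a\in A_-$ by duality and case on the side of $b$), and where the paper sidesteps the ``witness equals the deleted point'' coincidence by choosing a minimal witness up front, you instead replace that witness via transitivity using the deletability of $a$ itself --- both are fine.
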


\begin{proof}
  We will show the case where $b \in A_{-}$ is non-minimal.  Then there exists $z \in A_{-}$ such that $N_\mathbf{A}(z) \subsetneq N_\mathbf{A}(b)$. Without loss of generality, suppose $z$ is $\mathbf{A}$-minimal.

  In the first case, if $a\in A_{-}$, or if $a\in A_{+}$ and $b\mathrel{\mathord{\neg}A}a$, then $z\mathrel{\mathord{\neg}A}a$. Hence $N_{\mathbf{A}^{'}}(b) = N_\mathbf{A}(b)$ and $N_{\mathbf{A}^{'}}(z) = N_\mathbf{A}(z)$, so $b$ remains non-minimal in $\mathbf{A}^{'}$

  Next suppose that $b\mathrel{A}a$ and $z\mathrel{A}a$. Then $N_{\mathbf{A}^{'}}(z) \subsetneq N_{\mathbf{A}^{'}}(b)$ and $b$ remains non-minimal in $\mathbf{A}^{'}$. 
        
  Finally suppose that $b\mathrel{A}a$ and $z\mathrel{\mathord{\neg}A}a$. Then since $z$ is $\mathbf{A}$-minimal, $N_{\mathbf{A}^{'}}(z) \subseteq N_{\mathbf{A}^{'}}(b)$ and either $b$ is non-minimal in $\mathbf{A}^{'}$ or $N_{\mathbf{A}^{'}}(z) = N_{\mathbf{A}^{'}}(b)$ and $b$ is a twin of $z$ in  $\mathbf{A}^{'}$.
\end{proof}

\begin{definition}
  \label{definition: skeleton}
  Let $\bm{A}$ be a finite binary relation. The \emph{skeleton bimorphic form} of $\bm{A}$ is the relation obtained from the following procedure:
  \begin{enumerate}
    \item Delete all non-maximal points from $A_+$ and all non-minimal points from $A_-$, if there are any;
    \item Repeat the previous step until there are no such points to delete;
    \item Delete all but one point from each set of twins.
  \end{enumerate}
  We call a relation \emph{skeletal} if it is equal to its skeleton bimorphic form.
\end{definition} 

Refer to Figure~\ref{fig:skeleton} for a sample run of the skeleton procedure.

\begin{figure}[!htb]
  \centering
  \begin{tikzpicture}[scale=.5,baseline=-1cm]
    \draw[fill=black] (0,0) circle (2pt);
    \draw[fill=black] (2,0) circle (2pt);
    \draw[fill=black] (4,0) circle (2pt);
    \draw[fill=black] (6,0) circle (2pt);
    \draw[fill=black] (0,-4) circle (2pt);
    \draw[fill=black] (2,-4) circle (2pt);
    \draw[fill=black] (4,-4) circle (2pt);
    \draw[fill=black] (6,-4) circle (2pt);
    \draw[fill=black] (8,-4) circle (2pt);
    \draw[thick, red] (2,0) circle (10pt);
    \draw[thick, red] (4,0) circle (10pt);
    \draw[thick, red] (6,0) circle (10pt);
    \draw[thick, blue] (0,-4) circle (10pt);
    \draw[thick, blue] (2,-4) circle (10pt);
    \draw[thick, blue] (6,-4) circle (10pt);
    \draw[thick, blue] (8,-4) circle (10pt);
    \draw[thick] (0,0) -- (0,-4);
    \draw[thick] (2,0) -- (0,-4);
    \draw[thick] (2,0) -- (2,-4);
    \draw[thick] (4,0) -- (2,-4);  
    \draw[thick] (4,0) -- (4,-4); 
    \draw[thick] (6,0) -- (4,-4);  
    \draw[thick] (6,0) -- (6,-4); 
    \draw[thick] (6,0) -- (8,-4); 
  \end{tikzpicture}
  \qquad$\implies$\qquad
  \begin{tikzpicture}[scale=.5,baseline=-1cm]
    \draw[fill=black] (2,0) circle (2pt);
    \draw[fill=black] (4,0) circle (2pt);
    \draw[fill=black] (6,0) circle (2pt);
    \draw[fill=black] (0,-4) circle (2pt);
    \draw[fill=black] (2,-4) circle (2pt);
    \draw[fill=black] (6,-4) circle (2pt);
    \draw[fill=black] (8,-4) circle (2pt);
    \draw[thick, red] (2,0) circle (10pt);
    \draw[thick, red] (6,0) circle (10pt);
    \draw[thick, blue] (0,-4) circle (10pt);
    \draw[thick, blue] (6,-4) circle (10pt);
    \draw[thick, blue] (8,-4) circle (10pt);
    \draw[thick] (2,0) -- (0,-4);
    \draw[thick] (2,0) -- (2,-4);
    \draw[thick] (4,0) -- (2,-4);  
    \draw[thick] (6,0) -- (6,-4); 
    \draw[thick] (6,0) -- (8,-4); 
  \end{tikzpicture}
  
  \vspace{10mm}
  $\implies$\qquad
  \begin{tikzpicture}[scale=.5,baseline=-1cm]
    \draw[fill=black] (2,0) circle (2pt);
    \draw[fill=black] (6,0) circle (2pt);
    \draw[fill=black] (0,-4) circle (2pt);
    \draw[fill=black] (6,-4) circle (2pt);
    \draw[fill=black] (8,-4) circle (2pt);
    \draw[thick, red] (2,0) circle (10pt);
    \draw[thick, red] (6,0) circle (10pt);
    \draw[thick, blue] (0,-4) circle (10pt);
    \draw[thick, blue] (6,-4) circle (10pt);
    \draw[thick, blue] (8,-4) circle (10pt);
    \draw[thick] (2,0) -- (0,-4);
    \draw[thick] (6,0) -- (6,-4); 
    \draw[thick] (6,0) -- (8,-4); 
  \end{tikzpicture}    
  \qquad$\implies$\qquad
  \begin{tikzpicture}[scale=.5,baseline=-1cm]
    \draw[fill=black] (2,0) circle (2pt);
    \draw[fill=black] (6,0) circle (2pt);
    \draw[fill=black] (0,-4) circle (2pt);
    \draw[fill=black] (6,-4) circle (2pt);
    \draw[fill=white] (8,-4) circle (0pt);
    \draw[thick] (2,0) -- (0,-4);
    \draw[thick] (6,0) -- (6,-4); 
  \end{tikzpicture}
  \caption{Finding the Skeleton Bimorphic Form of a Relation. Maximal points in $A_{+}$ are circled in red. Minimal points in $A_{-}$ are circled in blue.\label{fig:skeleton}}
\end{figure}

It is not clear to us whether we needed to delete the deletable points before deleting the extra twins:

\begin{question}
  Is there a finite binary relation, together with a sequence of deletions of twins and deletable points, which results in a skeletal relation which is non-isomorphic to the skeleton produced by our algorithm?
\end{question}

We have not considered any analog of skeleton for infinite binary relations:

\begin{question}
  Does the skeleton algorithm terminate for infinite binary relations?
\end{question}

\section{Classification of small binary relations}
\label{sec:small}

In this section we discuss the classification of all binary relations with at most $6$ vertices in $A_\pm$. To facilitate this discussion, let's say that the \emph{order} of $\bm{A}$ is the value of $\max(|A_-|,|A_+|)$. We wrote a computer program to calculate the skeleton of every binary relation $\bm{A}$ of order at most $6$, and to further calculate which pairs have a morphism between them.

The first part of the program makes an exhaustive search of all binary relations with $|A_\pm|=6$, in each case applying the algorithm in Definition~\ref{definition: skeleton} to calculate the skeleton of the relation. The search included all skeletons of binary relations of order $\leq6$, because the operation of adding a point which is not adjacent to any of the others produces a bimorphic relation. The program found $394$ non-isomorphic skeletal binary relations.

The second part of the program checked for all possible morphisms between the $394$ skeletal binary relations. Here in order for the program to finish it was necessary to use several of the lemmas from previous sections, which are shortcuts in the search for a morphism. Specifically, the program is able to use each of the following from this paper: \ref{prop:dual dominating number}, \ref{cor:dual dom number}, \ref{prop: one infinity}, \ref{lemma-ladders dominate}, \ref{cor: ladders}, \ref{cor:dual bimorphic}, \ref{lem:surjhomomorphism}, \ref{lem-disjoint union converse}.

Additionally, the program uses the following simple observation which does not appear elsewhere in this paper: There exists a morphism $\bm{A}\to\bm{B}$ if and only if there exists a morphism $\bm{A'}\to\bm{B}$ where $\bm{A'}$ is constructed from $\bm{A}$ by deleting vertices from $A_-$ until its size is equal to that of $B_-$. The proof may be found in \cite[Lemma~3.4.8]{barton}. Finally, the program makes significant use of the transitiviy of morphism existence.

The program was coded in the R language. The code and full output are both avilable at \href{https://github.com/rhettbarton/tukey-morphisms}{github.com/rhettbarton/tukey-morphisms}.

Focusing just on the binary relations of order $\leq5$, there are $32$ non-isomorphic skeletal binary relations. In Table~\ref{table:Skeletons} we list these $32$ skeletons along with a graph depicting each.

\begin{table}\small
  \begin{center}
  \begin{tabular}{c|c|c|c}
  \begin{tikzpicture}[scale=0.35]
    \node[circle,fill=black,inner sep=1pt] (a) at (0,0) {};
    \node[circle,fill=black,inner sep=1pt] (b) at (0,-4) {};
  \end{tikzpicture}
  &
  \begin{tikzpicture}[scale=0.35]
    \node[circle,fill=black,inner sep=1pt] (a) at (0,0) {};
    \node[circle,fill=black,inner sep=1pt] (b) at (0,-4) {};
    \draw (a) edge (b);
  \end{tikzpicture}
  &
  \begin{tikzpicture}[scale=0.35]
    \node[circle,fill=black,inner sep=1pt] (a1) at (0,0) {};
    \node[circle,fill=black,inner sep=1pt] (a2) at (2,0) {};
    \node[circle,fill=black,inner sep=1pt] (b1) at (0,-4) {};
    \node[circle,fill=black,inner sep=1pt] (b2) at (2,-4) {};
    \draw (a1) edge (b1)
          (a2) edge (b2);
  \end{tikzpicture}
  &
  \begin{tikzpicture}[scale=0.35]
    \node[circle,fill=black,inner sep=1pt] (a1) at (0,0) {};
    \node[circle,fill=black,inner sep=1pt] (a2) at (2,0) {};
    \node[circle,fill=black,inner sep=1pt] (a3) at (4,0) {};
    \node[circle,fill=black,inner sep=1pt] (b1) at (0,-4) {};
    \node[circle,fill=black,inner sep=1pt] (b2) at (2,-4) {};
    \node[circle,fill=black,inner sep=1pt] (b3) at (4,-4) {};
    \draw (a1) edge (b1)
          (a2) edge (b2)
          (a3) edge (b3);
  \end{tikzpicture}
  \\ 1&2&3&4
  \\[1ex]\hline&&&\\[1ex]
  \begin{tikzpicture}[scale=0.35]
    \node[circle,fill=black,inner sep=1pt] (a1) at (0,0) {};
    \node[circle,fill=black,inner sep=1pt] (a2) at (2,0) {};
    \node[circle,fill=black,inner sep=1pt] (a3) at (4,0) {};
    \node[circle,fill=black,inner sep=1pt] (b1) at (0,-4) {};
    \node[circle,fill=black,inner sep=1pt] (b2) at (2,-4) {};
    \node[circle,fill=black,inner sep=1pt] (b3) at (4,-4) {};
    \draw (a1) edge (b2) edge (b3)
          (a2) edge (b1) edge (b3)
          (a3) edge (b1) edge (b2);
  \end{tikzpicture}
  &
  \begin{tikzpicture}[scale=0.35]
    \node[circle,fill=black,inner sep=1pt] (a1) at (0,0) {};
    \node[circle,fill=black,inner sep=1pt] (a2) at (2,0) {};
    \node[circle,fill=black,inner sep=1pt] (a3) at (4,0) {};
    \node[circle,fill=black,inner sep=1pt] (a4) at (6,0) {};
    \node[circle,fill=black,inner sep=1pt] (b1) at (0,-4) {};
    \node[circle,fill=black,inner sep=1pt] (b2) at (2,-4) {};
    \node[circle,fill=black,inner sep=1pt] (b3) at (4,-4) {};
    \node[circle,fill=black,inner sep=1pt] (b4) at (6,-4) {};
    \draw (a1) edge (b1)
          (a2) edge (b2)
          (a3) edge (b3)
          (a4) edge (b4);
  \end{tikzpicture}
  &
  \begin{tikzpicture}[scale=0.35]
    \node[circle,fill=black,inner sep=1pt] (a1) at (0,0) {};
    \node[circle,fill=black,inner sep=1pt] (a2) at (2,0) {};
    \node[circle,fill=black,inner sep=1pt] (a3) at (4,0) {};
    \node[circle,fill=black,inner sep=1pt] (a4) at (6,0) {};
    \node[circle,fill=black,inner sep=1pt] (b1) at (0,-4) {};
    \node[circle,fill=black,inner sep=1pt] (b2) at (2,-4) {};
    \node[circle,fill=black,inner sep=1pt] (b3) at (4,-4) {};
    \node[circle,fill=black,inner sep=1pt] (b4) at (6,-4) {};
    \draw (a1) edge (b2) edge (b3)
          (a2) edge (b1) edge (b3)
          (a3) edge (b1) edge (b2)
          (a4) edge (b4);
  \end{tikzpicture}
  &
  \begin{tikzpicture}[scale=0.35]
    \node[circle,fill=black,inner sep=1pt] (a1) at (0,0) {};
    \node[circle,fill=black,inner sep=1pt] (a2) at (2,0) {};
    \node[circle,fill=black,inner sep=1pt] (a3) at (4,0) {};
    \node[circle,fill=black,inner sep=1pt] (a4) at (6,0) {};
    \node[circle,fill=black,inner sep=1pt] (b1) at (0,-4) {};
    \node[circle,fill=black,inner sep=1pt] (b2) at (2,-4) {};
    \node[circle,fill=black,inner sep=1pt] (b3) at (4,-4) {};
    \node[circle,fill=black,inner sep=1pt] (b4) at (6,-4) {};
    \draw (a1) edge (b2) edge (b3) edge (b4)
          (a2) edge (b1) edge (b4)
          (a3) edge (b1) edge (b3)
          (a4) edge (b1) edge (b2);
  \end{tikzpicture}
  \\ 5&6&7&8
  \\[1ex]\hline&&&\\[1ex]
  \begin{tikzpicture}[scale=0.35]
    \node[circle,fill=black,inner sep=1pt] (a1) at (0,0) {};
    \node[circle,fill=black,inner sep=1pt] (a2) at (2,0) {};
    \node[circle,fill=black,inner sep=1pt] (a3) at (4,0) {};
    \node[circle,fill=black,inner sep=1pt] (a4) at (6,0) {};
    \node[circle,fill=black,inner sep=1pt] (b1) at (0,-4) {};
    \node[circle,fill=black,inner sep=1pt] (b2) at (2,-4) {};
    \node[circle,fill=black,inner sep=1pt] (b3) at (4,-4) {};
    \node[circle,fill=black,inner sep=1pt] (b4) at (6,-4) {};
    \draw (a1) edge (b3) edge (b4)
          (a2) edge (b2) edge (b4)
          (a3) edge (b1) edge (b3)
          (a4) edge (b1) edge (b2);
  \end{tikzpicture}
  &
  \begin{tikzpicture}[scale=0.35]
    \node[circle,fill=black,inner sep=1pt] (a1) at (0,0) {};
    \node[circle,fill=black,inner sep=1pt] (a2) at (2,0) {};
    \node[circle,fill=black,inner sep=1pt] (a3) at (4,0) {};
    \node[circle,fill=black,inner sep=1pt] (a4) at (6,0) {};
    \node[circle,fill=black,inner sep=1pt] (b1) at (0,-4) {};
    \node[circle,fill=black,inner sep=1pt] (b2) at (2,-4) {};
    \node[circle,fill=black,inner sep=1pt] (b3) at (4,-4) {};
    \node[circle,fill=black,inner sep=1pt] (b4) at (6,-4) {};
    \draw (a1) edge (b2) edge (b3) edge (b4)
          (a2) edge (b1) edge (b3) edge (b4)
          (a3) edge (b1) edge (b2) edge (b4)
          (a4) edge (b1) edge (b2) edge (b3);
  \end{tikzpicture}
  &
  \begin{tikzpicture}[scale=0.35]
    \node[circle,fill=black,inner sep=1pt] (a1) at (0,0) {};
    \node[circle,fill=black,inner sep=1pt] (a2) at (2,0) {};
    \node[circle,fill=black,inner sep=1pt] (a3) at (4,0) {};
    \node[circle,fill=black,inner sep=1pt] (a4) at (6,0) {};
    \node[circle,fill=black,inner sep=1pt] (b1) at (0,-4) {};
    \node[circle,fill=black,inner sep=1pt] (b2) at (2,-4) {};
    \node[circle,fill=black,inner sep=1pt] (b3) at (4,-4) {};
    \node[circle,fill=black,inner sep=1pt] (b4) at (6,-4) {};
    \node[circle,fill=black,inner sep=1pt] (b5) at (8,-4) {};
    \draw (a1) edge (b1) edge (b3) edge (b5)
          (a2) edge (b1) edge (b2) edge (b4)
          (a3) edge (b4) edge (b5)
          (a4) edge (b2) edge (b3);
  \end{tikzpicture}
  &
  \begin{tikzpicture}[scale=0.35]
    \node[circle,fill=black,inner sep=1pt] (a1) at (0,0) {};
    \node[circle,fill=black,inner sep=1pt] (a2) at (2,0) {};
    \node[circle,fill=black,inner sep=1pt] (a3) at (4,0) {};
    \node[circle,fill=black,inner sep=1pt] (a4) at (6,0) {};
    \node[circle,fill=black,inner sep=1pt] (a5) at (8,0) {};
    \node[circle,fill=black,inner sep=1pt] (b1) at (0,-4) {};
    \node[circle,fill=black,inner sep=1pt] (b2) at (2,-4) {};
    \node[circle,fill=black,inner sep=1pt] (b3) at (4,-4) {};
    \node[circle,fill=black,inner sep=1pt] (b4) at (6,-4) {};
    \draw (a1) edge (b1) edge (b2)
          (a2) edge (b2) edge (b4)
          (a3) edge (b1) edge (b4)
          (a4) edge (b2) edge (b3)
          (a5) edge (b1) edge (b3);
  \end{tikzpicture}
  \\ 9&10&11&12
  \\[1ex]\hline&&&\\[1ex]
  \begin{tikzpicture}[scale=0.35]
    \node[circle,fill=black,inner sep=1pt] (a1) at (0,0) {};
    \node[circle,fill=black,inner sep=1pt] (a2) at (2,0) {};
    \node[circle,fill=black,inner sep=1pt] (a3) at (4,0) {};
    \node[circle,fill=black,inner sep=1pt] (a4) at (6,0) {};
    \node[circle,fill=black,inner sep=1pt] (a5) at (8,0) {};
    \node[circle,fill=black,inner sep=1pt] (b1) at (0,-4) {};
    \node[circle,fill=black,inner sep=1pt] (b2) at (2,-4) {};
    \node[circle,fill=black,inner sep=1pt] (b3) at (4,-4) {};
    \node[circle,fill=black,inner sep=1pt] (b4) at (6,-4) {};
    \node[circle,fill=black,inner sep=1pt] (b5) at (8,-4) {};
    \draw (a1) edge (b1)
          (a2) edge (b2)
          (a3) edge (b3)
          (a4) edge (b4)
          (a5) edge (b5);
  \end{tikzpicture}
  &
  \begin{tikzpicture}[scale=0.35]
    \node[circle,fill=black,inner sep=1pt] (a1) at (0,0) {};
    \node[circle,fill=black,inner sep=1pt] (a2) at (2,0) {};
    \node[circle,fill=black,inner sep=1pt] (a3) at (4,0) {};
    \node[circle,fill=black,inner sep=1pt] (a4) at (6,0) {};
    \node[circle,fill=black,inner sep=1pt] (a5) at (8,0) {};
    \node[circle,fill=black,inner sep=1pt] (b1) at (0,-4) {};
    \node[circle,fill=black,inner sep=1pt] (b2) at (2,-4) {};
    \node[circle,fill=black,inner sep=1pt] (b3) at (4,-4) {};
    \node[circle,fill=black,inner sep=1pt] (b4) at (6,-4) {};
    \node[circle,fill=black,inner sep=1pt] (b5) at (8,-4) {};
    \draw (a1) edge (b2) edge (b3)
          (a2) edge (b1) edge (b3)
          (a3) edge (b1) edge (b2)
          (a4) edge (b5)
          (a5) edge (b4);
  \end{tikzpicture}
  &
  \begin{tikzpicture}[scale=0.35]
    \node[circle,fill=black,inner sep=1pt] (a1) at (0,0) {};
    \node[circle,fill=black,inner sep=1pt] (a2) at (2,0) {};
    \node[circle,fill=black,inner sep=1pt] (a3) at (4,0) {};
    \node[circle,fill=black,inner sep=1pt] (a4) at (6,0) {};
    \node[circle,fill=black,inner sep=1pt] (a5) at (8,0) {};
    \node[circle,fill=black,inner sep=1pt] (b1) at (0,-4) {};
    \node[circle,fill=black,inner sep=1pt] (b2) at (2,-4) {};
    \node[circle,fill=black,inner sep=1pt] (b3) at (4,-4) {};
    \node[circle,fill=black,inner sep=1pt] (b4) at (6,-4) {};
    \node[circle,fill=black,inner sep=1pt] (b5) at (8,-4) {};
    \draw (a1) edge (b2) edge (b3) edge (b4)
          (a2) edge (b1) edge (b4)
          (a3) edge (b1) edge (b3)
          (a4) edge (b1) edge (b2)
          (a5) edge (b5);
  \end{tikzpicture}
  &
  \begin{tikzpicture}[scale=0.35]
    \node[circle,fill=black,inner sep=1pt] (a1) at (0,0) {};
    \node[circle,fill=black,inner sep=1pt] (a2) at (2,0) {};
    \node[circle,fill=black,inner sep=1pt] (a3) at (4,0) {};
    \node[circle,fill=black,inner sep=1pt] (a4) at (6,0) {};
    \node[circle,fill=black,inner sep=1pt] (a5) at (8,0) {};
    \node[circle,fill=black,inner sep=1pt] (b1) at (0,-4) {};
    \node[circle,fill=black,inner sep=1pt] (b2) at (2,-4) {};
    \node[circle,fill=black,inner sep=1pt] (b3) at (4,-4) {};
    \node[circle,fill=black,inner sep=1pt] (b4) at (6,-4) {};
    \node[circle,fill=black,inner sep=1pt] (b5) at (8,-4) {};
    \draw (a1) edge (b3) edge (b4)
          (a2) edge (b2) edge (b4)
          (a3) edge (b1) edge (b3)
          (a4) edge (b1) edge (b2)
          (a5) edge (b5);
  \end{tikzpicture}
  \\ 13&14&15&16
  \\[1ex]\hline&&&\\[1ex]
  \begin{tikzpicture}[scale=0.35]
    \node[circle,fill=black,inner sep=1pt] (a1) at (0,0) {};
    \node[circle,fill=black,inner sep=1pt] (a2) at (2,0) {};
    \node[circle,fill=black,inner sep=1pt] (a3) at (4,0) {};
    \node[circle,fill=black,inner sep=1pt] (a4) at (6,0) {};
    \node[circle,fill=black,inner sep=1pt] (a5) at (8,0) {};
    \node[circle,fill=black,inner sep=1pt] (b1) at (0,-4) {};
    \node[circle,fill=black,inner sep=1pt] (b2) at (2,-4) {};
    \node[circle,fill=black,inner sep=1pt] (b3) at (4,-4) {};
    \node[circle,fill=black,inner sep=1pt] (b4) at (6,-4) {};
    \node[circle,fill=black,inner sep=1pt] (b5) at (8,-4) {};
    \draw (a1) edge (b2) edge (b3) edge (b4)
          (a2) edge (b1) edge (b3) edge (b4)
          (a3) edge (b1) edge (b2) edge (b3)
          (a4) edge (b1) edge (b2) edge (b3)
          (a5) edge (b5);
  \end{tikzpicture}
  &
  \begin{tikzpicture}[scale=0.35]
    \node[circle,fill=black,inner sep=1pt] (a1) at (0,0) {};
    \node[circle,fill=black,inner sep=1pt] (a2) at (2,0) {};
    \node[circle,fill=black,inner sep=1pt] (a3) at (4,0) {};
    \node[circle,fill=black,inner sep=1pt] (a4) at (6,0) {};
    \node[circle,fill=black,inner sep=1pt] (a5) at (8,0) {};
    \node[circle,fill=black,inner sep=1pt] (b1) at (0,-4) {};
    \node[circle,fill=black,inner sep=1pt] (b2) at (2,-4) {};
    \node[circle,fill=black,inner sep=1pt] (b3) at (4,-4) {};
    \node[circle,fill=black,inner sep=1pt] (b4) at (6,-4) {};
    \node[circle,fill=black,inner sep=1pt] (b5) at (8,-4) {};
    \draw (a1) edge (b2) edge (b3) edge (b4) edge (b5)
          (a2) edge (b1) edge (b5)
          (a3) edge (b1) edge (b4)
          (a4) edge (b1) edge (b3)
          (a5) edge (b1) edge (b2);
  \end{tikzpicture}
  &
  \begin{tikzpicture}[scale=0.35]
    \node[circle,fill=black,inner sep=1pt] (a1) at (0,0) {};
    \node[circle,fill=black,inner sep=1pt] (a2) at (2,0) {};
    \node[circle,fill=black,inner sep=1pt] (a3) at (4,0) {};
    \node[circle,fill=black,inner sep=1pt] (a4) at (6,0) {};
    \node[circle,fill=black,inner sep=1pt] (a5) at (8,0) {};
    \node[circle,fill=black,inner sep=1pt] (b1) at (0,-4) {};
    \node[circle,fill=black,inner sep=1pt] (b2) at (2,-4) {};
    \node[circle,fill=black,inner sep=1pt] (b3) at (4,-4) {};
    \node[circle,fill=black,inner sep=1pt] (b4) at (6,-4) {};
    \node[circle,fill=black,inner sep=1pt] (b5) at (8,-4) {};
    \draw (a1) edge (b2) edge (b3) edge (b4)
          (a2) edge (b1) edge (b5)
          (a3) edge (b1) edge (b4)
          (a4) edge (b1) edge (b3)
          (a5) edge (b2) edge (b4);
  \end{tikzpicture}
  &
  \begin{tikzpicture}[scale=0.35]
    \node[circle,fill=black,inner sep=1pt] (a1) at (0,0) {};
    \node[circle,fill=black,inner sep=1pt] (a2) at (2,0) {};
    \node[circle,fill=black,inner sep=1pt] (a3) at (4,0) {};
    \node[circle,fill=black,inner sep=1pt] (a4) at (6,0) {};
    \node[circle,fill=black,inner sep=1pt] (a5) at (8,0) {};
    \node[circle,fill=black,inner sep=1pt] (b1) at (0,-4) {};
    \node[circle,fill=black,inner sep=1pt] (b2) at (2,-4) {};
    \node[circle,fill=black,inner sep=1pt] (b3) at (4,-4) {};
    \node[circle,fill=black,inner sep=1pt] (b4) at (6,-4) {};
    \node[circle,fill=black,inner sep=1pt] (b5) at (8,-4) {};
    \draw (a1) edge (b4) edge (b5)
          (a2) edge (b3) edge (b5)
          (a3) edge (b2) edge (b4)
          (a4) edge (b1) edge (b3)
          (a5) edge (b1) edge (b2);
  \end{tikzpicture}
  \\ 17&18&19&20
  \\[1ex]\hline&&&\\[1ex]
  \begin{tikzpicture}[scale=0.35]
    \node[circle,fill=black,inner sep=1pt] (a1) at (0,0) {};
    \node[circle,fill=black,inner sep=1pt] (a2) at (2,0) {};
    \node[circle,fill=black,inner sep=1pt] (a3) at (4,0) {};
    \node[circle,fill=black,inner sep=1pt] (a4) at (6,0) {};
    \node[circle,fill=black,inner sep=1pt] (a5) at (8,0) {};
    \node[circle,fill=black,inner sep=1pt] (b1) at (0,-4) {};
    \node[circle,fill=black,inner sep=1pt] (b2) at (2,-4) {};
    \node[circle,fill=black,inner sep=1pt] (b3) at (4,-4) {};
    \node[circle,fill=black,inner sep=1pt] (b4) at (6,-4) {};
    \node[circle,fill=black,inner sep=1pt] (b5) at (8,-4) {};
    \draw (a1) edge (b1) edge (b2) edge (b3)
          (a2) edge (b1) edge (b5)
          (a3) edge (b1) edge (b4)
          (a4) edge (b3) edge (b5)
          (a5) edge (b2) edge (b4);
  \end{tikzpicture}
  &
  \begin{tikzpicture}[scale=0.35]
    \node[circle,fill=black,inner sep=1pt] (a1) at (0,0) {};
    \node[circle,fill=black,inner sep=1pt] (a2) at (2,0) {};
    \node[circle,fill=black,inner sep=1pt] (a3) at (4,0) {};
    \node[circle,fill=black,inner sep=1pt] (a4) at (6,0) {};
    \node[circle,fill=black,inner sep=1pt] (a5) at (8,0) {};
    \node[circle,fill=black,inner sep=1pt] (b1) at (0,-4) {};
    \node[circle,fill=black,inner sep=1pt] (b2) at (2,-4) {};
    \node[circle,fill=black,inner sep=1pt] (b3) at (4,-4) {};
    \node[circle,fill=black,inner sep=1pt] (b4) at (6,-4) {};
    \node[circle,fill=black,inner sep=1pt] (b5) at (8,-4) {};
    \draw (a1) edge (b1) edge (b3) edge (b5)
          (a2) edge (b2) edge (b3) edge (b4)
          (a3) edge (b1) edge (b2)
          (a4) edge (b2) edge (b5)
          (a5) edge (b1) edge (b4);
  \end{tikzpicture}
  &
  \begin{tikzpicture}[scale=0.35]
    \node[circle,fill=black,inner sep=1pt] (a1) at (0,0) {};
    \node[circle,fill=black,inner sep=1pt] (a2) at (2,0) {};
    \node[circle,fill=black,inner sep=1pt] (a3) at (4,0) {};
    \node[circle,fill=black,inner sep=1pt] (a4) at (6,0) {};
    \node[circle,fill=black,inner sep=1pt] (a5) at (8,0) {};
    \node[circle,fill=black,inner sep=1pt] (b1) at (0,-4) {};
    \node[circle,fill=black,inner sep=1pt] (b2) at (2,-4) {};
    \node[circle,fill=black,inner sep=1pt] (b3) at (4,-4) {};
    \node[circle,fill=black,inner sep=1pt] (b4) at (6,-4) {};
    \node[circle,fill=black,inner sep=1pt] (b5) at (8,-4) {};
    \draw (a1) edge (b1) edge (b2) edge (b4)
          (a2) edge (b1) edge (b2) edge (b3)
          (a3) edge (b2) edge (b5)
          (a4) edge (b1) edge (b5)
          (a5) edge (b3) edge (b4);
  \end{tikzpicture}
  &
  \begin{tikzpicture}[scale=0.35]
    \node[circle,fill=black,inner sep=1pt] (a1) at (0,0) {};
    \node[circle,fill=black,inner sep=1pt] (a2) at (2,0) {};
    \node[circle,fill=black,inner sep=1pt] (a3) at (4,0) {};
    \node[circle,fill=black,inner sep=1pt] (a4) at (6,0) {};
    \node[circle,fill=black,inner sep=1pt] (a5) at (8,0) {};
    \node[circle,fill=black,inner sep=1pt] (b1) at (0,-4) {};
    \node[circle,fill=black,inner sep=1pt] (b2) at (2,-4) {};
    \node[circle,fill=black,inner sep=1pt] (b3) at (4,-4) {};
    \node[circle,fill=black,inner sep=1pt] (b4) at (6,-4) {};
    \node[circle,fill=black,inner sep=1pt] (b5) at (8,-4) {};
    \draw (a1) edge (b1) edge (b2) edge (b5)
          (a2) edge (b1) edge (b2) edge (b4)
          (a3) edge (b3) edge (b4) edge (b5)
          (a4) edge (b2) edge (b3)
          (a5) edge (b1) edge (b3);
  \end{tikzpicture}
  \\ 21&22&23&24
  \\[1ex]\hline&&&\\[1ex]
  \begin{tikzpicture}[scale=0.35]
    \node[circle,fill=black,inner sep=1pt] (a1) at (0,0) {};
    \node[circle,fill=black,inner sep=1pt] (a2) at (2,0) {};
    \node[circle,fill=black,inner sep=1pt] (a3) at (4,0) {};
    \node[circle,fill=black,inner sep=1pt] (a4) at (6,0) {};
    \node[circle,fill=black,inner sep=1pt] (a5) at (8,0) {};
    \node[circle,fill=black,inner sep=1pt] (b1) at (0,-4) {};
    \node[circle,fill=black,inner sep=1pt] (b2) at (2,-4) {};
    \node[circle,fill=black,inner sep=1pt] (b3) at (4,-4) {};
    \node[circle,fill=black,inner sep=1pt] (b4) at (6,-4) {};
    \node[circle,fill=black,inner sep=1pt] (b5) at (8,-4) {};
    \draw (a1) edge (b1) edge (b2) edge (b3)
          (a2) edge (b1) edge (b2) edge (b5)
          (a3) edge (b1) edge (b3) edge (b4)
          (a4) edge (b3) edge (b5)
          (a5) edge (b2) edge (b4);
  \end{tikzpicture}
  &
  \begin{tikzpicture}[scale=0.35]
    \node[circle,fill=black,inner sep=1pt] (a1) at (0,0) {};
    \node[circle,fill=black,inner sep=1pt] (a2) at (2,0) {};
    \node[circle,fill=black,inner sep=1pt] (a3) at (4,0) {};
    \node[circle,fill=black,inner sep=1pt] (a4) at (6,0) {};
    \node[circle,fill=black,inner sep=1pt] (a5) at (8,0) {};
    \node[circle,fill=black,inner sep=1pt] (b1) at (0,-4) {};
    \node[circle,fill=black,inner sep=1pt] (b2) at (2,-4) {};
    \node[circle,fill=black,inner sep=1pt] (b3) at (4,-4) {};
    \node[circle,fill=black,inner sep=1pt] (b4) at (6,-4) {};
    \node[circle,fill=black,inner sep=1pt] (b5) at (8,-4) {};
    \draw (a1) edge (b1) edge (b2) edge (b4)
          (a2) edge (b1) edge (b2) edge (b3)
          (a3) edge (b2) edge (b4) edge (b5)
          (a4) edge (b1) edge (b3) edge (b5)
          (a5) edge (b3) edge (b4);
  \end{tikzpicture}
  &
  \begin{tikzpicture}[scale=0.35]
    \node[circle,fill=black,inner sep=1pt] (a1) at (0,0) {};
    \node[circle,fill=black,inner sep=1pt] (a2) at (2,0) {};
    \node[circle,fill=black,inner sep=1pt] (a3) at (4,0) {};
    \node[circle,fill=black,inner sep=1pt] (a4) at (6,0) {};
    \node[circle,fill=black,inner sep=1pt] (a5) at (8,0) {};
    \node[circle,fill=black,inner sep=1pt] (b1) at (0,-4) {};
    \node[circle,fill=black,inner sep=1pt] (b2) at (2,-4) {};
    \node[circle,fill=black,inner sep=1pt] (b3) at (4,-4) {};
    \node[circle,fill=black,inner sep=1pt] (b4) at (6,-4) {};
    \node[circle,fill=black,inner sep=1pt] (b5) at (8,-4) {};
    \draw (a1) edge (b2) edge (b3) edge (b4) edge (b5)
          (a2) edge (b1) edge (b3) edge (b4)
          (a3) edge (b1) edge (b2) edge (b4)
          (a4) edge (b1) edge (b2) edge (b3)
          (a5) edge (b1) edge (b5);
  \end{tikzpicture}
  &
  \begin{tikzpicture}[scale=0.35]
    \node[circle,fill=black,inner sep=1pt] (a1) at (0,0) {};
    \node[circle,fill=black,inner sep=1pt] (a2) at (2,0) {};
    \node[circle,fill=black,inner sep=1pt] (a3) at (4,0) {};
    \node[circle,fill=black,inner sep=1pt] (a4) at (6,0) {};
    \node[circle,fill=black,inner sep=1pt] (a5) at (8,0) {};
    \node[circle,fill=black,inner sep=1pt] (b1) at (0,-4) {};
    \node[circle,fill=black,inner sep=1pt] (b2) at (2,-4) {};
    \node[circle,fill=black,inner sep=1pt] (b3) at (4,-4) {};
    \node[circle,fill=black,inner sep=1pt] (b4) at (6,-4) {};
    \node[circle,fill=black,inner sep=1pt] (b5) at (8,-4) {};
    \draw (a1) edge (b2) edge (b3) edge (b4)
          (a2) edge (b1) edge (b3) edge (b4)
          (a3) edge (b1) edge (b2) edge (b3)
          (a4) edge (b1) edge (b2) edge (b5)
          (a5) edge (b4) edge (b5);
  \end{tikzpicture}
  \\ 25&26&27&28
  \\[1ex]\hline&&&\\[1ex]
  \begin{tikzpicture}[scale=0.35]
    \node[circle,fill=black,inner sep=1pt] (a1) at (0,0) {};
    \node[circle,fill=black,inner sep=1pt] (a2) at (2,0) {};
    \node[circle,fill=black,inner sep=1pt] (a3) at (4,0) {};
    \node[circle,fill=black,inner sep=1pt] (a4) at (6,0) {};
    \node[circle,fill=black,inner sep=1pt] (a5) at (8,0) {};
    \node[circle,fill=black,inner sep=1pt] (b1) at (0,-4) {};
    \node[circle,fill=black,inner sep=1pt] (b2) at (2,-4) {};
    \node[circle,fill=black,inner sep=1pt] (b3) at (4,-4) {};
    \node[circle,fill=black,inner sep=1pt] (b4) at (6,-4) {};
    \node[circle,fill=black,inner sep=1pt] (b5) at (8,-4) {};
    \draw (a1) edge (b2) edge (b3) edge (b4) edge (b5)
          (a2) edge (b1) edge (b3) edge (b4) edge (b5)
          (a3) edge (b1) edge (b2) edge (b5)
          (a4) edge (b1) edge (b2) edge (b4)
          (a5) edge (b1) edge (b2) edge (b3);
  \end{tikzpicture}
  &
  \begin{tikzpicture}[scale=0.35]
    \node[circle,fill=black,inner sep=1pt] (a1) at (0,0) {};
    \node[circle,fill=black,inner sep=1pt] (a2) at (2,0) {};
    \node[circle,fill=black,inner sep=1pt] (a3) at (4,0) {};
    \node[circle,fill=black,inner sep=1pt] (a4) at (6,0) {};
    \node[circle,fill=black,inner sep=1pt] (a5) at (8,0) {};
    \node[circle,fill=black,inner sep=1pt] (b1) at (0,-4) {};
    \node[circle,fill=black,inner sep=1pt] (b2) at (2,-4) {};
    \node[circle,fill=black,inner sep=1pt] (b3) at (4,-4) {};
    \node[circle,fill=black,inner sep=1pt] (b4) at (6,-4) {};
    \node[circle,fill=black,inner sep=1pt] (b5) at (8,-4) {};
    \draw (a1) edge (b2) edge (b3) edge (b4) edge (b5)
          (a2) edge (b1) edge (b4) edge (b5)
          (a3) edge (b1) edge (b3) edge (b5)
          (a4) edge (b1) edge (b2) edge (b4)
          (a5) edge (b1) edge (b2) edge (b3);
  \end{tikzpicture}
  &
  \begin{tikzpicture}[scale=0.35]
    \node[circle,fill=black,inner sep=1pt] (a1) at (0,0) {};
    \node[circle,fill=black,inner sep=1pt] (a2) at (2,0) {};
    \node[circle,fill=black,inner sep=1pt] (a3) at (4,0) {};
    \node[circle,fill=black,inner sep=1pt] (a4) at (6,0) {};
    \node[circle,fill=black,inner sep=1pt] (a5) at (8,0) {};
    \node[circle,fill=black,inner sep=1pt] (b1) at (0,-4) {};
    \node[circle,fill=black,inner sep=1pt] (b2) at (2,-4) {};
    \node[circle,fill=black,inner sep=1pt] (b3) at (4,-4) {};
    \node[circle,fill=black,inner sep=1pt] (b4) at (6,-4) {};
    \node[circle,fill=black,inner sep=1pt] (b5) at (8,-4) {};
    \draw (a1) edge (b3) edge (b4) edge (b5)
          (a2) edge (b2) edge (b4) edge (b5)
          (a3) edge (b1) edge (b3) edge (b5)
          (a4) edge (b1) edge (b2) edge (b4)
          (a5) edge (b1) edge (b2) edge (b3);
  \end{tikzpicture}
  &
  \begin{tikzpicture}[scale=0.35]
    \node[circle,fill=black,inner sep=1pt] (a1) at (0,0) {};
    \node[circle,fill=black,inner sep=1pt] (a2) at (2,0) {};
    \node[circle,fill=black,inner sep=1pt] (a3) at (4,0) {};
    \node[circle,fill=black,inner sep=1pt] (a4) at (6,0) {};
    \node[circle,fill=black,inner sep=1pt] (a5) at (8,0) {};
    \node[circle,fill=black,inner sep=1pt] (b1) at (0,-4) {};
    \node[circle,fill=black,inner sep=1pt] (b2) at (2,-4) {};
    \node[circle,fill=black,inner sep=1pt] (b3) at (4,-4) {};
    \node[circle,fill=black,inner sep=1pt] (b4) at (6,-4) {};
    \node[circle,fill=black,inner sep=1pt] (b5) at (8,-4) {};
    \draw (a1) edge (b2) edge (b3) edge (b4) edge (b5)
          (a2) edge (b1) edge (b3) edge (b4) edge (b5)
          (a3) edge (b1) edge (b2) edge (b4) edge (b5)
          (a4) edge (b1) edge (b2) edge (b3) edge (b5)
          (a5) edge (b1) edge (b2) edge (b3) edge (b4);
  \end{tikzpicture}
  \\ 29&30&31&32
  \end{tabular}
  \end{center}
  \caption{Skeleton forms of binary relations of order $\leq5$\label{table:Skeletons}}
\end{table}

In Figure~\ref{fig:hasse diagram}, we plot the Hasse diagram of the morphism quasi-order on the skeletal relations of order $\leq5$. In the diagram, the skeletal relations are also arranged according to their dominating number and dual dominating number.

\begin{figure}
  \centering
  \begin{tikzpicture}
    \node (a) at (0,1) {$\mathbf{A}$};
    \node (1) at (0,0) {$1$};
    \node (13) at (0,-1) {$13$};
    \node (6) at (0,-2) {$6$};
    \node (14) at (0,-3) {$14$};
    \node (4) at (0,-4) {$4,16$};
    \node (15) at (-2,-5) {$15$};
    \node (7) at (-2,-6) {$7$};
    \node (20) at (2,-5) {$20$};
    \node (21) at (0,-7) {$21$};
    \node (17) at (-2,-8) {$17$};
    \node (23) at (2,-8) {$23$};
    \node (3) at (0,-9) {$3,9,11,12,19,22,25,28$};
    \node (18) at (-2,-10) {$18$};
    \node (24) at (2,-10) {$24$};
    \node (26) at (0,-11) {$26$};
    \node (8) at (-2,-12) {$8$};
    \node (27) at (-2,-13) {$27$};
    \node (31) at (2,-13) {$31$};
    \node (5) at (0,-14) {$5,30$};
    \node (29) at (0,-15) {$29$};
    \node (10) at (0,-16) {$10$};
    \node (32) at (0,-17) {$32$};
    \node (2) at (0,-18) {$2$};
    \draw (1) -- (13) -- (6) -- (14) -- (4) -- (15) -- (7) --(17) -- (3);
    \draw (7) -- (21);
    \draw (4) -- (20) -- (21) -- (23) -- (3) -- (24) -- (26) -- (8);
    \draw (26) -- (31) -- (5);
    \draw (3) -- (18) -- (8) -- (27) -- (5) -- (29) -- (10) -- (32) -- (2);
    \node (d) at (-7,1) {$\delta(\mathbf{A})$};
    \node (dinf) at (-7,0) {$\infty$};
    \draw[OI1] (dinf) -- (1);
    \node (d5) at (-7,-1) {$5$};
    \draw[OI2] (d5) -- (13);
    \node (d4) at (-7,-2) {$4$};
    \draw[OI3] (d4) -- (6);
    \node (d3) at (-7,-4) {$3$};
    \draw[OI4] (d3) -- (4);
    \node (d2) at (-7,-9) {$2$};
    \draw[OI5] (d2) -- (3);
    \node (d1) at (-7,-18) {$1$};
    \draw[black] (d1) -- (2);
    
    \filldraw[OI3!20] (-6.75,-2) rectangle (-3.5,-3.5);
    \filldraw[OI4!20] (-6.75,-4) rectangle (-3.5,-8.5);
    \filldraw[OI5!20] (-6.75,-9) rectangle (-3.5,-17.5);
    
    \node (dd) at (7,1) {$\delta(\mathbf{A}^\bot)$};
    \node (dinf) at (7,-18) {$\infty$};
    \draw[OI1] (dinf) -- (2);
    \node (d5) at (7,-17) {$5$};
    \draw[OI2] (d5) -- (32);
    \node (d4) at (7,-16) {$4$};
    \draw[OI3] (d4) -- (10);
    \node (d3) at (7,-14) {$3$};
    \draw[OI4] (d3) -- (5);
    \node (d2) at (7,-9) {$2$};
    \draw[OI5] (d2) -- (3);
    \node (d1) at (7,0) {$1$};
    \draw[black] (d1) -- (1);
    
    \filldraw[OI3!20] (6.75,-16) rectangle (3.5,-14.5);
    \filldraw[OI4!20] (6.75,-14) rectangle (3.5,-9.5);
    \filldraw[OI5!20] (6.75,-9) rectangle (3.5,-0.5);           
  \end{tikzpicture}
  \caption{Hasse Diagram of skeleton forms of binary relations of order $\leq5$\label{fig:hasse diagram}}
\end{figure}

In the diagram, we can immediately observe that there are pairs of skeletal relations with identical dominating number and dual dominating number, but with no morphism between them, such as \#15 and \#20. Thus we found finite counterexamples to the converse of Corollary~\ref{cor:dual dom number}, as promised in the introduction.

The diagram helps visualize some of our results in Sections~\ref{sec:intro} and~\ref{sec:examples}. It also shows some surprising structural properties not reflected in our results, leading for instance to the following question.

\begin{question}
  If $\delta(\bm{A})\geq \delta(\bm{A}^\perp)$, then is it necessarily the case that $\bm{A}\to\bm{A}^\perp$?
\end{question}

Increasing our search parameter from order at most $5$ to order at most $6$ increased our computing time from a few minutes to $30$ hours.  This larger search uncovered our first binary relation $\bm{A}$ such that $\delta(\bm{A}),\delta(\bm{A}^\perp)\geq 3$, shown in Figure~\ref{fig:33}. The particular features of this example helped motivate the constructions in the next section.

\begin{figure}
  \centering
  \begin{tikzpicture}[scale=.5]
    \draw[fill=black] (0,0) circle (2pt);
    \draw[fill=black] (2,0) circle (2pt);
    \draw[fill=black] (4,0) circle (2pt);
    \draw[fill=black] (6,0) circle (2pt);
    \draw[fill=black] (8,0) circle (2pt);
    \draw[fill=black] (10,0) circle (2pt);
    \draw[fill=black] (0,-4) circle (2pt);
    \draw[fill=black] (2,-4) circle (2pt);
    \draw[fill=black] (4,-4) circle (2pt);
    \draw[fill=black] (6,-4) circle (2pt);
    \draw[fill=black] (8,-4) circle (2pt);
    \draw[fill=black] (10,-4) circle (2pt);
    \draw[thick, OI1] (0,0) -- (0,-4);
    \draw[thick, OI1] (2,0) -- (2,-4);
    \draw[thick, OI1] (0,0) -- (2,-4);
    \draw[thick, OI1] (2,0) -- (0,-4);
    \draw[thick, OI1] (4,0) -- (4,-4);
    \draw[thick, OI1] (6,0) -- (6,-4);
    \draw[thick, OI1] (4,0) -- (6,-4);
    \draw[thick, OI1] (6,0) -- (4,-4);
    \draw[thick, OI1] (8,0) -- (8,-4);
    \draw[thick, OI1] (10,0) -- (10,-4);
    \draw[thick, OI1] (8,0) -- (10,-4);
    \draw[thick, OI1] (10,0) -- (8,-4);
    \draw[thick, OI2] (4,0) -- (0,-4);
    \draw[thick, OI2] (6,0) -- (2,-4);
    \draw[thick, OI2] (8,0) -- (4,-4);
    \draw[thick, OI2] (10,0) -- (6,-4);
    \draw[thick, OI3] (0,0) -- (8,-4);
    \draw[thick, OI3] (2,0) -- (10,-4);
    \node at (-1,0) {$A_{+}$};
    \node at (-1,-4) {$A_{-}$};
    \node at (-1,-2) {$A$};
  \end{tikzpicture}
  \caption{An example of a binary relation $\bm{A}$ with $\delta(\bm{A})=\delta(\bm{A}^\perp)=3$.\label{fig:33}}
\end{figure}

\section{Structure of the Tukey morphism ordering}
\label{sec:construction}

In this section we show how to construct binary relations with given dominating number and dual dominating number.

To begin, we introduce the following matrix representation of binary relations. Let $\bm{A}$ be a binary relation, and let $A_-=\{x_1,\ldots,x_n\}$, $A_+=\{y_1,\ldots,y_m\}$. Then we say $\bm{A}$ is represented by the $n\times m$ matrix $(a_{ij})$ where $a_{ij}=1$ if $x_i\mathrel{A}y_j$ and $a_{ij}=0$ otherwise.

Observe that if we permute the (arbitrary) ordering of the elements of $A_\pm$, then the rows and columns of the representing matrix will be permuted accordingly. Conversely, permuting the rows and columns of a matrix representation of $\bm{A}$ results in a binary relation isomorphic to $\bm{A}$. Because of this, we will often conflate a binary relation $\bm{A}$ with a matrix representation of $\bm{A}$, and this should not cause confusion.

We note that the matrix representations of $\bm{A}$ and $\bm{A}^\perp$ are related in the following way: a representation of $\bm{A}$ is the transpose of the Boolean negation of a representation of $\bm{A}^\perp$.

For an example of a matrix representation, an $n$-ladder may be represented by the $n\times n$ identity matrix. Changing the order of the elements of $A_\pm$, an $n$-ladder may also be represented by any $n\times n$ permutation matrix. For another example, the binary relation in Figure~\ref{fig:33} is represented by the matrix shown in Figure~\ref{fig:33matrix}.

\begin{figure}[h!]
  \[\left(\begin{array}{cc|cc|cc}
    1&1&0&0&1&0\\
    1&1&0&0&0&1\\
    \hline
    1&0&1&1&0&0\\
    0&1&1&1&0&0\\
    \hline
    0&0&1&0&1&1\\
    0&0&0&1&1&1\\
    \end{array}\right)
  \]
  \caption{The matrix representation of the binary relation in Figure~\ref{fig:33}.\label{fig:33matrix}}
\end{figure}

The structure of this matrix helped motivate the proof of the following general construction.

\begin{theorem}
  \label{theorem:block1}
  For each $n\geq 2$, there is a binary relation $\bm{C_n}$ with $\lvert C_{n-}\rvert=\lvert C_{n+}\rvert=n!$ such that $\delta(\bm{C_n})=\delta(\bm{C_n^\perp})=n$.
\end{theorem}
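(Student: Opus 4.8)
The plan is to construct $\bm{C_n}$ recursively and verify the two dominating numbers by induction on $n$, carrying along the auxiliary claim that $\bm{C_n}$ is isomorphic to its own dual. Take $\bm{C_2}$ to be the $2$-ladder. Given $\bm{C_{n-1}}$, fix an $n$-cycle $\pi$ of $\{1,\dots,n\}$ (say $\pi\colon i\mapsto i+1$, indices modulo $n$) and define $\bm{C_n}$ by its matrix representation: the $n\times n$ array of $(n-1)!\times(n-1)!$ blocks whose block at position $(i,i)$ is all ones, whose block at position $(i,\pi(i))$ is all zeros, and all of whose other blocks are copies of the matrix of $\bm{C_{n-1}}$, where we fix in advance a single bijection identifying the rows of every row-block with the domain of $\bm{C_{n-1}}$ and a single bijection identifying the columns of every column-block with the codomain of $\bm{C_{n-1}}$, used in all the $\bm{C_{n-1}}$-blocks alike. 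A direct check shows that for $n=3$ this yields the relation of Figure~\ref{fig:33} and the matrix of Figure~\ref{fig:33matrix}, and in general $\lvert C_{n-}\rvert=\lvert C_{n+}\rvert=n\cdot(n-1)!=n!$.

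First, $\delta(\bm{C_n})\le n$ is immediate: choosing one column from each of the $n$ column-blocks gives a dominating family, since the all-ones diagonal block relates every row of a row-block to the chosen column there. Then, for the dual, recall that the matrix of $\bm{C_n^\perp}$ is the transpose of the Boolean complement of the matrix of $\bm{C_n}$. Boolean complement turns the diagonal all-ones blocks into all-zeros blocks and the all-zeros blocks at the $\pi$-positions into all-ones blocks, and sends each $\bm{C_{n-1}}$-block to the complement of a $\bm{C_{n-1}}$-block; after transposing and then relabelling the column-blocks by $\pi$, the all-ones blocks return to the diagonal, the all-zeros blocks return to the $\pi$-positions, and the remaining blocks are matrices of $\bm{C_{n-1}^\perp}$. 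Invoking the inductive hypothesis $\bm{C_{n-1}^\perp}\cong\bm{C_{n-1}}$ and applying the witnessing relabellings of rows and columns uniformly inside every row-block and column-block --- a change which does not affect the all-ones or all-zeros blocks --- completes an isomorphism $\bm{C_n^\perp}\cong\bm{C_n}$. In particular $\delta(\bm{C_n^\perp})=\delta(\bm{C_n})$, so it remains only to show $\delta(\bm{C_n})\ge n$.

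For the lower bound I would argue as follows. Suppose, for contradiction, that $Y$ is a $\bm{C_n}$-dominating family with $\lvert Y\rvert\le n-1$, and let $\bar S$ be the set of indices $i$ for which $Y$ meets no column of the $i$-th column-block; since $\lvert Y\rvert\le n-1<n$ we have $\bar S\neq\emptyset$. Consider any $i\in\bar S$. The rows of the $i$-th row-block can be dominated only through the blocks at positions $(i,j)$ with $j\notin\{i,\pi(i)\}$, since the block at $(i,i)$ has its column-block empty and the block at $(i,\pi(i))$ is all zeros. All those blocks are copies of $\bm{C_{n-1}}$ sharing one common identification of the row-block with the domain of $\bm{C_{n-1}}$, so the columns of $Y$ occurring in them project, through the column-block identifications, to a $\bm{C_{n-1}}$-dominating family; hence $Y$ contains at least $\delta(\bm{C_{n-1}})=n-1$ columns lying outside the $i$-th and the $\pi(i)$-th column-blocks. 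As $\lvert Y\rvert\le n-1$ and $Y$ already avoids the $i$-th column-block, this forces $\lvert Y\rvert=n-1$ and $Y$ to avoid the $\pi(i)$-th column-block as well, i.e.\ $\pi(i)\in\bar S$. Since $i\in\bar S$ was arbitrary, $\bar S$ is a nonempty $\pi$-invariant subset of $\{1,\dots,n\}$; but $\pi$ is an $n$-cycle, so $\bar S=\{1,\dots,n\}$, whence $Y=\emptyset$, contradicting $\lvert Y\rvert=n-1\ge1$. Therefore $\delta(\bm{C_n})\ge n$, and combining with the above, $\delta(\bm{C_n})=\delta(\bm{C_n^\perp})=n$.

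I expect the main obstacle to be the design of the construction itself, so that all of these arguments can coexist: the $\bm{C_{n-1}}$-blocks must share a common row-identification, so that emptying one column-block genuinely forces an entire $\bm{C_{n-1}}$-dominating family into the surviving blocks of that row-block; the all-zeros blocks must lie along a single $n$-cycle, so that this ``starvation'' propagates to every index at once; and self-duality independently constrains the all-zeros blocks to sit at a derangement. These demands are compatible precisely because an $n$-cycle is a derangement. The remaining work --- the matrix bookkeeping behind $\bm{C_n^\perp}\cong\bm{C_n}$, and the base case $\bm{C_2}$, which is the $2$-ladder with $\delta=\delta^\perp=2$ and $\bm{C_2^\perp}\cong\bm{C_2}$ --- is routine.
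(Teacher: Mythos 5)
Your proposal is correct and follows essentially the same route as the paper: the identical recursive block-matrix construction with all-ones diagonal blocks and all-zeros blocks along an $n$-cycle, self-duality to equate the two dominating numbers, and an inductive pigeonhole argument on the block columns missed by a putative small dominating family. Your write-up is somewhat more detailed (the paper leaves the upper bound and the self-duality isomorphism as easy checks), and your lower bound propagates emptiness around the cycle via a $\pi$-invariant set rather than locating a single empty-to-nonempty transition, but these are only cosmetic differences.
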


\begin{proof}
  Let $\bm{C_2}$ be the $2$-ladder.  Given $\bm{C_n}$, we construct $\bm{C_{n+1}}$ so that it is isomorphic to its dual and $\delta(\bm{C_{n+1}})=n+1$.  This ensures that $\delta(\bm{C_{n+1}}^\perp)=n+1$ as well.  In fact, let $\bm{C_{n+1}}$ be the block matrix shown in Figure \ref{fig:block1}, where there are $n+1$ rows and columns of blocks, each of size $n!\times n!$.
  
  \begin{figure}[h!]
    \[\begin{pmatrix}
    \mathcal{J}_{n!} & \mathcal{O}_{n!} & \bm{C_n} & \bm{C_n} & \bm{C_n}  &\ldots & \bm{C_n}\\
    \bm{C_n} & \mathcal{J}_{n!} & \mathcal{O}_{n!} &  \bm{C_n} & \bm{C_n} &\ldots & \bm{C_n}\\
    \bm{C_n} & \bm{C_n} & \mathcal{J}_{n!} & \mathcal{O}_{n!} &  \bm{C_n} &\ldots & \bm{C_n}\\
    \vdots\\
    \vdots\\
    \bm{C_n} & \bm{C_n} & \bm{C_n}  &\ldots & \bm{C_n} & \mathcal{J}_{n!} & \mathcal{O}_{n!}\\
    \mathcal{O}_{n!} & \bm{C_n} & \bm{C_n} & \bm{C_n}  &\ldots & \bm{C_n} & \mathcal{J}_{n!}\\
    \end{pmatrix}
    \]
    \caption{The recursive construction in the proof of Theorem \ref{theorem:block1}.\label{fig:block1}}
  \end{figure} 
    
  Here $O_{n!}$ is the all-zeros matrix and $\mathcal{J}_{n!}$ is the all-ones matrix.  To see that $\bm{C_{n+1}}$ is isomorphic to its dual, first permute the row blocks and column blocks to interchange the $\mathcal{J}$s and $\mathcal{O}$s.  Then, within each block, apply the permutation of rows and columns which sends $\bm{C_n}$ to its dual.

  To show that $\delta(\bm{C_{n+1}})=n+1$, let $X\subseteq C_{(n+1)+}$ where $\lvert X \rvert\leq n$.   By pigeonhole, there is a $1\leq b\leq n+1$ so that the $b$th block column does not contain an element of $X$ and the $(b+1)$th $\pmod{n+1}$ block column does contain an element of $X$.
  This means that the elements of  $C_{(n+1)-}$ represented by the $b$th block row are covered by fewer than $n$ elements of identical copies of $C_{n+}$.  So by induction, not every element in this block row is covered by an element of $X$.
\end{proof}

Our data shows that this is optimal for $n=2,3$. That is, in order to obtain $\delta(\bm{A})=\delta(\bm{A^\perp})=2$ we need $|A_\pm|\geq2$, and to obtain $\delta(\bm{A})=\delta(\bm{A^\perp})=3$ we need $|A_\pm|\geq6$. We therefore ask the following.

\begin{question}
  Does there exist a binary relation $\bm{A}$ such that the order of $\bm{A}$ is less than $n!$, and $\delta(\bm{A})=\delta(\bm{A^\perp})=n$?
\end{question}

Our data furthermore shows that this construction is essentially unique for $n=2,3$.

\begin{question}
  Suppose that $\bm{A}$ is a binary relation of order at most $n!$, that $\delta(\bm{A})=\delta(\bm{A^\perp})=n$, and that $\bm{A}$ is isomorphic to $\bm{A^\perp}$. Does it follow that $\bm{A}$ is isomorphic to $\bm{C_n}$?
\end{question}

We next show how to obtain arbitrary dominating numbers and dual dominating numbers.

\begin{lemma}
  For each $2\leq k\leq n$, there is a relation $\bm{C_{nk}}=(C_{nk-},C_{nk+},C_{nk})$ so that $\delta(\bm{C_{nk}})=n$ and $\delta(\bm{C_{nk}^\perp})=k$.
\end{lemma}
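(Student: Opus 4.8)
The plan is to construct $\bm{C_{nk}}$ by recursion on $k$, in the spirit of the block construction in Theorem~\ref{theorem:block1}. For the base case $k=2$, I take $\bm{C_{n2}}$ to be the $n$-ladder, which has $\delta=n$ and, since $n\geq2$, dual dominating number $2$. For the inductive step, fix $3\leq k\leq n$ and suppose $\bm{C_{(n-1)(k-1)}}$ has already been built with $\delta(\bm{C_{(n-1)(k-1)}})=n-1$ and $\delta(\bm{C_{(n-1)(k-1)}^\perp})=k-1$ (the parameters satisfy $2\leq k-1\leq n-1$, so the hypothesis applies). I then let $\bm{C_{nk}}$ be the $n\times n$ block matrix, each block of the common size of a matrix representing $\bm{C_{(n-1)(k-1)}}$, whose $i$th block row has the all-ones block $J$ in block column $i$, the all-zeros block $O$ in block column $i+1\pmod n$, and a copy of $\bm{C_{(n-1)(k-1)}}$ in each of the other $n-2$ block columns. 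When $k=n$ this reproduces the relation $\bm{C_n}$ of Theorem~\ref{theorem:block1}.

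The first step is to verify $\delta(\bm{C_{nk}})=n$, by exactly the argument of Theorem~\ref{theorem:block1}. Picking one column from each block column dominates, because the column picked from block column $i$ sits inside the $J$-block of block row $i$; hence $\delta(\bm{C_{nk}})\leq n$. Conversely, given any set $S$ of fewer than $n$ columns, pigeonhole produces a block column $b$ meeting no column of $S$ while block column $b+1$ meets $S$; then in block row $b$ the $J$-block (column $b$) and the $O$-block (column $b+1$) are useless, so at most $n-2$ columns of $S$ fall among the $n-2$ copies of $\bm{C_{(n-1)(k-1)}}$ in that block row. Since $\delta(\bm{C_{(n-1)(k-1)}})=n-1>n-2$, those columns cannot dominate block row $b$, so $\delta(\bm{C_{nk}})\geq n$.

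The second step is to verify $\delta(\bm{C_{nk}^\perp})=k$, which amounts to showing that the least number of rows of $\bm{C_{nk}}$ such that every column of $\bm{C_{nk}}$ has a $0$ in one of them is exactly $k$. For the upper bound, I take $k-1$ rows inside a single block row $i_0$ forming a $\bm{C_{(n-1)(k-1)}^\perp}$-dominating family of the inner relation (possible as $\delta(\bm{C_{(n-1)(k-1)}^\perp})=k-1$), together with one more row drawn from block row $i_0-1$. The inner family puts a $0$ in every column of every block column $j\neq i_0$ (for $j=i_0+1$ via the $O$-block of block row $i_0$, and for the remaining $j$ via the $\bm{C_{(n-1)(k-1)}}$-block at position $(i_0,j)$), and the extra row puts a $0$ in every column of block column $i_0$ via the $O$-block at position $(i_0-1,i_0)$; that is $k$ rows in all. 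For the lower bound, suppose $R$ is such a set with $|R|\leq k-1$, and let $T$ be the set of block rows meeting $R$. A column in block column $j$ can escape having a $0$ in $R$ unless $j-1\in T$ (its $O$-block) or the rows of $R$ lying in the block rows $T\setminus\{j,j-1\}$ form a $\bm{C_{(n-1)(k-1)}^\perp}$-dominating family of the inner relation; since that family needs at least $k-1$ rows it must use all of $R$, forcing $j\notin T$, $j-1\notin T$, and $|R|=k-1$. Now split on whether the rows of $R$, read inside a $\bm{C_{(n-1)(k-1)}}$-block, form a $\bm{C_{(n-1)(k-1)}^\perp}$-dominating family. If they do not, then every block column must be handled by an $O$-block, which forces $T=\mathbb{Z}/n\mathbb{Z}$, impossible since $|T|\leq k-1<n$. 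If they do, choose $j\in T$ with $j-1\notin T$, which exists because $T$ is a nonempty proper subset of the cycle $\mathbb{Z}/n\mathbb{Z}$; in block column $j$ the rows of $R$ coming from block row $j$ lie in the $J$-block, and only at most $k-2$ rows of $R$ remain, too few to $\bm{C_{(n-1)(k-1)}^\perp}$-dominate, so some column of block column $j$ has no $0$ in $R$. Either way $R$ fails, so $\delta(\bm{C_{nk}^\perp})\geq k$.

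The main obstacle I anticipate is the lower bound $\delta(\bm{C_{nk}^\perp})\geq k$: the crux is to see precisely how the single $J$-block on each block row's diagonal forces the extra unit in the dual dominating number, which is exactly what the recursion $k\mapsto k-1$ is built to pay for, and to keep straight which block rows feed $\bm{C_{(n-1)(k-1)}^\perp}$-dominating rows into which block columns. The remaining bookkeeping --- checking the base case, that the recursion keeps its indices in range, and that $k=n$ recovers $\bm{C_n}$ --- should be routine.
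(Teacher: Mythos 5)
Your proof is correct, but it is built on a genuinely different recursion from the one in the paper. The paper fixes $n$ and recurses on $k$ alone: its $\bm{C_{n(k+1)}}$ is an $n\times n$ block matrix whose off-diagonal blocks are copies of $\bm{C_{nk}}$, giving relations of order $n^{k-1}$. You instead decrement both parameters, filling the $n\times n$ block matrix with copies of $\bm{C_{(n-1)(k-1)}}$; this yields relations of order $n!/(n-k+1)!$, which is never larger than $n^{k-1}$ and which, as you note, literally specializes to the $\bm{C_n}$ of Theorem~\ref{theorem:block1} when $k=n$. That is a real improvement: the paper itself remarks that its $n^{k-1}$ is ``much bigger'' than the $n!$ of Theorem~\ref{theorem:block1} when $k=n$ and poses the question of the smallest possible order, and your construction narrows that gap while unifying the two constructions. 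The verifications are in the same spirit in both proofs --- one element (or row) per block column for the upper bounds, and the cyclic pigeonhole argument locating a block column $b$ missed by the family whose successor is hit, so that block row $b$ is left to too few elements of the inner relation, for the lower bounds --- though you carry out the dual lower bound directly on the original matrix by tracking zeros in rows rather than passing to the dual block matrix as the paper does. One small simplification: your case split on whether $R$ projects to a dual-dominating family of the inner relation is unnecessary, since the argument in your second case (pick $j\in T$ with $j-1\notin T$, note the $O$-block route fails and at most $k-2$ rows of $R$ avoid block rows $j$ and $j-1$) already derives a contradiction without that hypothesis; but this redundancy does not affect correctness.
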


\begin{proof}
  Fix $n\geq 2$ and let $C_{n2}$ be the $n$-ladder.  Given $k<n$ and $C_{nk}$, we let $C_{n(k+1)}$ be the following block matrix, where there are $n$ rows and columns of blocks, each of size $\lvert C_{nk-}\rvert \times \lvert C_{nk+}\rvert$.
  \begin{center}
    $\begin{pmatrix}
    \mathcal{J} &\mathcal{O} & \bm{C_{nk}} & \bm{C_{nk}} & \bm{C_{nk}}  &\ldots & \bm{C_{nk}}\\
    \bm{C_{nk}} & \mathcal{J} &\mathcal{O} &  \bm{C_{nk}} & \bm{C_{nk}} &\ldots & \bm{C_{nk}}\\
    \bm{C_{nk}} & \bm{C_{nk}} & \mathcal{J} &\mathcal{O} &  \bm{C_{nk}} &\ldots & \bm{C_{nk}}\\
    \vdots\\
    \vdots\\
    \bm{C_{nk}} & \bm{C_{nk}} & \bm{C_{nk}}  &\ldots & \bm{C_{nk}} & \mathcal{J} &\mathcal{O} \\
    O & \bm{C_{nk}} & \bm{C_{nk}} & \bm{C_{nk}}  &\ldots & \bm{C_{nk}} & \mathcal{J}\\
    \end{pmatrix}$
  \end{center}
  By taking one element from each block column, we obtain $\delta(\bm{C_{n(k+1)}})\leq n$.  Essentially the same argument from the previous proof then shows that $\delta(\bm{C_{n(k+1)}})\geq n$.

  Next consider $\bm{C_{n(k+1)}^\perp}$, which is isomorphic to the relation represented by the following block matrix.
  \begin{center}
    $\begin{pmatrix}
    \mathcal{J} &\mathcal{O} & \bm{C_{nk}^\perp} & \bm{C_{nk}^\perp} & \bm{C_{nk}^\perp}  &\ldots & \bm{C_{nk}^\perp}\\
    \bm{C_{nk}^\perp} & \mathcal{J} &\mathcal{O} &  \bm{C_{nk}^\perp} & \bm{C_{nk}^\perp} &\ldots & \bm{C_{nk}^\perp}\\
    \bm{C_{nk}^\perp} & \bm{C_{nk}^\perp} & \mathcal{J} &\mathcal{O} &  \bm{C_{nk}^\perp} &\ldots & \bm{C_{nk}^\perp}\\
    \vdots\\
    \vdots\\
    \bm{C_{nk}^\perp} & \bm{C_{nk}^\perp} & \bm{C_{nk}^\perp}  &\ldots & \bm{C_{nk}^\perp} & \mathcal{J} &\mathcal{O} \\
    \mathcal{O} & \bm{C_{nk}^\perp} & \bm{C_{nk}^\perp} & \bm{C_{nk}^\perp}  &\ldots & \bm{C_{nk}^\perp} & \mathcal{J}\\
    \end{pmatrix}$
  \end{center}
  To show $\delta(\bm{C_{n(k+1)}^\perp})\leq k+1$, consider a set of $k$ elements from the second block column which together cover all of $(\bm{C_{nk}^\perp})_-$ together with any element of the first block column.  In order to show $\delta(\bm{C_{n(k+1)}^\perp})\geq k+1$, let $X$ be a set of $k$ elements of $(\bm{C_{n(k+1)}^\perp})_+$.  By pigeonhole, there is a $1\leq b\leq n$ so that the $b$th block column does not contain an element of $X$ and the $(b+1)$th $\pmod{n}$ block column does contain an element of $X$.
  This means that the elements of  $(\bm{C_{n(k+1)}^\perp})_-$ represented by the $b$th block row are covered by fewer than $k$ elements of identical copies of $\bm{C_{nk}^\perp}$.
\end{proof}

We observe that the construction above produced very large binary relations.  Specifically, to obtain a $\delta(\bm{A})=n$ and $\delta(\bm{A}^\perp)=k$ with $k\leq n$, we needed $A_\pm$ to each have at least $n^{k-1}$ elements. In particular, if $n=k$, then this is much bigger than the $n!$ points we used in Lemma \ref{theorem:block1}. We therefore ask the following.

\begin{question}
  What is the smallest possible order of a relation $\bm{A}$ for which $\delta(\bm{A})=n$ and $\delta(\bm{A^\perp})=k$?
\end{question}

The construction above helps answer some of the most basic questions about the structure of the morphism quasi-order on the finite binary relations.

\begin{corollary}
  The morphism quasi-order on the finite binary relations contains infinite ascending chains, infinite descending chains, and infinite antichains.
\end{corollary}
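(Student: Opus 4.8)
The plan is to produce each of the three structures explicitly, using the $n$-ladders and their duals for the chains and the relations $\bm{C_n}$ from Theorem~\ref{theorem:block1} for the antichain; in every case the verification reduces to comparing the invariants $\delta(\bm{A})$ and $\delta(\bm{A^\perp})$ via Propositions~\ref{prop:blass} and~\ref{prop:dual dominating number} and Corollary~\ref{cor:dual dom number}.

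First I would treat the chains. Let $\bm{L_n}$ denote an $n$-ladder, so that $\delta(\bm{L_n})=n$ for $n\geq 2$. By Corollary~\ref{cor: ladders} there is a morphism $\bm{L_{n+1}}\to\bm{L_n}$, while by Proposition~\ref{prop:blass} there is no morphism $\bm{L_n}\to\bm{L_{n+1}}$, since such a morphism would force $n=\delta(\bm{L_n})\geq\delta(\bm{L_{n+1}})=n+1$. Hence $\bm{L_2},\bm{L_3},\bm{L_4},\ldots$ are pairwise comparable and pairwise non-bimorphic, so they constitute an infinite chain in the morphism quasi-order. Applying the order-reversing involution $\bm{A}\mapsto\bm{A^\perp}$ (Proposition~\ref{prop:dual dominating number} together with the identity $(\bm{A^\perp})^\perp=\bm{A}$) produces a second infinite chain $\bm{L_2^\perp},\bm{L_3^\perp},\bm{L_4^\perp},\ldots$, running in the opposite direction. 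One of these two chains is ascending and the other descending, so the quasi-order contains both an infinite ascending chain and an infinite descending chain.

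Next I would build the antichain from the relations $\bm{C_n}$ of Theorem~\ref{theorem:block1}, which satisfy $\delta(\bm{C_n})=\delta(\bm{C_n^\perp})=n$. Fix $m\neq n$; without loss of generality $m<n$. A morphism $\bm{C_m}\to\bm{C_n}$ would give $m=\delta(\bm{C_m})\geq\delta(\bm{C_n})=n$ by Proposition~\ref{prop:blass}, which is impossible, and a morphism $\bm{C_n}\to\bm{C_m}$ would give $m=\delta(\bm{C_m^\perp})\geq\delta(\bm{C_n^\perp})=n$ by Corollary~\ref{cor:dual dom number}, which is also impossible. Thus no morphism exists between distinct members of the family $\{\bm{C_n}:n\geq 2\}$, so this family is an infinite antichain.

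I do not expect a genuine obstacle, since the essential work is carried out in Theorem~\ref{theorem:block1}: the antichain is the only one of the three structures not already visible in the ladders --- by Corollary~\ref{cor: ladders} the ladders are linearly ordered and so contain no two incomparable elements --- and the relations $\bm{C_n}$ are built precisely so that both $\delta(\bm{C_n})$ and $\delta(\bm{C_n^\perp})$ increase with $n$, which by Corollary~\ref{cor:dual dom number} rules out morphisms in both directions at once. Everything else is routine bookkeeping with the two dominating-number invariants.
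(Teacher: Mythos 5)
Your proposal is correct and matches the argument the paper intends (the paper leaves the corollary unproved, presenting it as an immediate consequence of the preceding constructions): chains come from the ladders and their duals via Corollary~\ref{cor: ladders}, Proposition~\ref{prop:blass}, and Proposition~\ref{prop:dual dominating number}, and the antichain comes from the relations $\bm{C_n}$ of Theorem~\ref{theorem:block1}, with Corollary~\ref{cor:dual dom number} ruling out morphisms in both directions since $\delta(\bm{C_n})$ and $\delta(\bm{C_n^\perp})$ both grow with $n$. No gaps.
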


\begin{question}
  What else can one show about the structure of the Tukey morphism quasi-order?
\end{question}

\bibliographystyle{plain}
\bibliography{tukeyfin}

\end{document}